\title{On Conflict-Free Colorings of Cyclic Polytopes and the Girth Conjecture for Graphs}
\author{Seunghun Lee\footnote{Seunghun Lee, Department of Mathematics, Keimyung University, Daegu, South Korea. Supported by the Institute for Basic Science (IBS-R029-C1) and the Department of Mathematical Sciences of Korea Advanced Institute of Science and Technology (BK21). \texttt{seunghun.math@gmail.com}. https://orcid.org/0000-0003-0838-1680} 
	\ and Shakhar Smorodinsky\footnote{
		Institute for the Theory of Computing, Ben-Gurion University of the NEGEV, Be'er Sheva 84105, Israel. Partially supported by Grant 1065/20 from the Israel Science Foundation, by the United States – Israel Binational Science Foundation (NSF-BSF grant no. 2022792) and by ERC grant no. 882971 "GeoScape" and by the Erd{\H o}s Center. \texttt{shakhar@bgu.ac.il}. 
        https://orcid.org/0000-0003-3038-6955}
}
\date{}
\newtheorem{theorem}{Theorem}[section]
\newtheorem{proposition}[theorem]{Proposition}
\newtheorem{lemma}[theorem]{Lemma}
\newtheorem{Q}[theorem]{Question}
\newtheorem{corollary}[theorem]{Corollary}
\newtheorem*{claim*}{Claim}
\theoremstyle{definition}
\newtheorem{definition}[theorem]{Definition}
\newcommand{\cE}{\mathcal{E}}
\newcommand{\F}{\mathcal{F}}
\DeclareMathOperator{\CF}{\chi_{cf}}
\newcommand{\II}{\mathsf{I}}
\newcommand{\FC}{\mathsf{FC}}
\newcommand{\Hm}[2]{\mathsf{H}^{\gamma_{#1}}_{#2}}
\DeclareMathOperator{\conv}{conv}
\def\Re{\mathbb R}
\def\C{\cal C}
\begin{document}

\maketitle

\begin{abstract}
 We study the conflict-free chromatic number of hypergraphs derived from the family of facets of $d$-dimensional cyclic polytopes with $n$ vertices. While in odd dimensions $d$ the problem is easy, for even dimensions the problem becomes very difficult and exhibits interesting connections to extremal graph theory. We provide sharp asymptotic bounds for the conflict-free chromatic number in several small even dimensions and non-trivial upper and lower bounds for general even dimensions. The main purpose of this paper is revealing a surprising relation between conflict-free colorings and the celebrated Erd\H{o}s girth conjecture, opening new avenues for future research.
\end{abstract}

\section{Introduction}
\label{sec:intro}
\subsection{Conflict-free coloring on the moment curve}
Given a hypergraph $H = (V, \cE)$, a vertex coloring is called \emph{conflict-free} (or \textit{CF-coloring}) if, for every hyperedge $e \in \cE$, there exists at least one vertex $v \in e$ whose color is unique within $e$. The minimum number of colors needed for a CF-coloring of $H$ is called the \emph{conflict-free chromatic number} (or \textit{CF-chromatic number}) and is denoted by $\chi_{\mathrm{cf}}(H)$. In particular, it is straightforward that $\chi_{\mathrm{cf}}(H) \geq \chi(H)$, where $\chi(H)$ is the proper (or usual) chromatic number of $H$, requiring every hyperedge of size at least two to contain at least two differently colored vertices.

Conflict-free colorings have been extensively studied in both combinatorics and computational geometry. Their initial motivation came from frequency assignment problems in cellular networks \cite{ELRS,SmPHD}. Since then, the concept has been generalized to various types of hypergraphs, both abstract and geometrically defined, leading to a vibrant research area with numerous follow-up publications. The interest in conflict-free coloring has also spread to other disciplines, including logic, graph theory, theoretical computer science, and algorithms (see, e.g., \cite{AADFGHKS17,AS08,CheilarisGRS14,cf7,cf6,FeketeK18,Hajnal2011,HS02,JartouxKSY24,KellerRS21,KellerS20a,CFPT09}). For a comprehensive overview, see the survey \cite{CF-survey} and references therein.

\smallskip

 A substantial amount of research in discrete geometry is devoted to providing sharp bounds on the CF-chromatic numbers of hypergraphs induced by geometric shapes: For a family $\C$ of shapes in $\Re^d$ we say that a hypergraph $H=(V,\cE)$ \textit{is induced by} $\C$ if $V$ is a finite set of points in $\Re^d$ and $\cE =\{ V \cap C | C \in \C\}$. The most studied cases are hypergraphs on points in $\Re^d$ induced by ``well-behaved" 
shapes such as balls, half-spaces, axis parallel-boxes or more generally, semi-algebraic sets defined by ``low-degree" polynomials.

In this paper, we are interested in hypergraphs induced by points in $\Re^d$ with respect to half-spaces. For such a hypergraph $H$ on $n$ points in $\Re^2$ or $\Re^3$, it is well known that $\CF(H)=O(\log n)$ and this bound is asymptotically sharp \cite{ELRS,HS02,smoro}. However, the situation becomes completely different in dimensions at least $4$; in fact, it can easily seen that the chromatic number can be at least $n$, the number of points, which is also a trivial upper bound. 
Such lower bounds are given e.g., by points on the  \textit{moment curve} $\gamma_d: \Re \to \Re^d$, defined by $\gamma_d(t)=(t,t^2, \dots, t^d)$. Before we discuss this in detail, let us first briefly cover some known facts about the curve, which are useful for further discussion. The moment curve has the following combinatorial properties. Every set of at most $d+1$ distinct points on $\gamma_d$ are affinely independent and any point set on $\gamma_d$ is in convex position. Many properties of points on the curve depend only on the order of the points along $\gamma_d$. So we will frequently abuse our notations and identify an $n$-point set on the moment curve with $[n]=\{1,2,\dots, n\}$. We also refer to the convex hull of a set of $n$ points on $\gamma_d$ as ``the" \textit{cyclic polytope} $\mathsf{C}_d(n)$, and identify a face of $\mathsf{C}_d(n)$ with its vertex set. Due to this combinatorial description, the moment curve has been of interest in combinatorics and discrete geometry.

 It is well-known that $\mathsf{C}_d(n)$  is a  \textit{neighborly polytope}; namely, any vertex subset $S\subseteq [n]$ with $|S|\leq d/2$ forms a face of $\mathsf{C}_d(n)$. By the celebrated upper bound theorem of McMullen \cite{upper_bound_mcmullen}, the cyclic polytope $\mathsf{C}_d(n)$ as well as all $d$-dimensional (simplicial) neighborly polytopes on $n$ vertices have the largest number of faces among all $d$-polytopes on $n$ vertices. For more backgrounds on the moment curve and neighborly polytopes, refer to \cite{lectures_on_polytopes_book}.

Taking our attention back to CF-colorings, let $\Hm{d}{n}$ be the hypergraph on $n$ points on $\gamma_d$ induced by half-spaces. Note that for $d \geq 4$, $\mathsf{C}_{d}(n)$ contains the complete graph $K_n$, namely, all pairs of vertices form a hyperedge of cardianlity $2$ since $\mathsf{C}_{d}(n)$ is neighborly. This (trivially) implies that $\CF(\Hm{d}{n})=n$ for $d \geq 4$ in contrast to dimensions $d \in \{1,2,3\}$.

Note that a similar situation happens even for a sub-hypergraph $H'$ of $\Hm{d}{n}$ which contains all hyperedges of $\Hm{d}{n}$ of a certain size $k\leq d/2$. Then $H'$ contains the complete $k$-uniform hypergraph on $n$ vertices.  Indeed, by the pigeon-hole principle it is an easy exercise to show that the CF-chromatic number of the complete $k$-uniform hypergraph is $\Omega(n/k)$.
It is therefore, natural to ask the following general question where we forbid hyperedges of small size in a sub-hypergraph:
\begin{Q} \label{question_main}
For $d \geq 4$, what is 
the CF-chromatic number of a sub-hypergraph of $\Hm{d}{n}$ without any hyperedges of size at most $d/2$?
 \end{Q}

The main difficulty of Question \ref{question_main} arises from the fact that the underlying hypergraph family is not hereditary. 
A family $\cal H$ of hypergraphs is {\em hereditary} if they are closed under projection; or formally if $H=(V,\cE)$ being in the family  $\cal H$ implies that for every $V' \subseteq V$, $H[V']$ also belongs to the family $\cal H$, where $H[V']=(V',\{e \cap V' | e \in \cE\})$.
Many hypergraph families that were studied in the literature with respect to CF-colorings are hereditary.  For example, in \cite{smoro} it was shown that if $H=(V,\cE)$ with $|V|=n$ satisfies that for every $V' \subset V$, $\chi(H[V']) \leq k$ then $\CF(H)=O(k \log n)$. Without the hereditary condition one can construct a hypergraph $H$ on $n$ vertices with $\chi(H)=2$ while $\CF(H) = \Omega(n)$, 
for example,
\[H'=([2m], \{e \subset [2m]: |e|=4,\, e\cap [m]\ne \emptyset, e\cap \{m+1,m+2, \dots, 2m\} \ne \emptyset \}) \]
has $\CF(H')=m+1$ while clearly $\chi(H')=2$ for $m\geq 3$. 

None of the hypergraphs studied in this paper exhibits such a hereditary property and therefore we cannot use any of the standard tools developed in the literature for hereditary hypergraphs. Hence we need a different approach. 

\smallskip 

In this paper, we give a partial answer to Question \ref{question_main} for the \textit{facet hypergraph} of a cyclic polytope $\FC_d(n)$ and the \textit{2-interval hypergraph} $\II_n^2$, see precise definitions below.
Our techniques combine combinatorial arguments with tools from extremal graph theory and combinatorial properties of the moment curve. In particular for cyclic polytopes, 
we exhibit a strong relation between $\CF(\FC_d(n))$ and the following famous Tur{\'a}n type problem in extremal graph theory known as the {\em girth conjecture} \cite{erdos_girth_conjecture}: \textit{What is the maximum possible number of edges that a graph with girth at least $2k+1$ can have?} For general background on such extremal problems, we refer the reader to the survey \cite{furedi_simonovits_degenerate_extremal_survey}.

This relation might be of independent interest and we believe that it can shed light on CF-coloring problems for other classes of geometric hypergraphs. For example, the so-called family of \textit{$(AB)^{l/2}$-free hypergraphs} was recently introduced in \cite{ABA} as an abstract approach for the study of coloring problems on geometric hypergraphs. See also \cite{ABAB_stabbed_pseudodisk,  dual_ABAB2024, complexity_ABAB}. As we observe in Section \ref{sec:final_remarks}, the sub-hypergraphs of $\Hm{d}{n}$ are $(AB)^{(d+2)/2}$-free hypergraphs. Hence, it is natural to ask if our approach and results can be extended to such families as well.

\subsection{Results}
\textbf{Cyclic polytopes, the Facet hypergraph and the Girth conjecture.} 
We define the \textit{facet hypergraph} $\FC_d(n)$ of the cyclic polytope $\mathsf{C}_d(n)$ by
\[\FC_d(n)=([n],\{f\subseteq [n]: \textrm{$f$ is a facet of $\mathsf{C}_d(n)$}\}).\]
Since $\mathsf{C}_d(n)$ is simplicial, $\FC_d(n)$ is $d$-uniform.

We provide asymptotically sharp bounds on $\CF(\FC_d(n))$ in several small dimensions, along with non-trivial upper and lower bounds for general even dimensions. Here we only consider even dimensions $d$ since it is easy to see that $\CF(\FC_d(n)) \leq 3$  for odd $d$, see Proposition~\ref{prop:CF_odd_cyclic} in below. 

\begin{theorem}[Lower bounds on $\CF(\FC_d(n))$ for even $d$]{} \label{thm:lower_bound_CF}
	\leavevmode
	\begin{enumerate}[(1)]
		\item $\CF(\FC_d(n)) = \Omega(\sqrt{n})$ for every even $d \geq 4$.
		\item For every positive integer $k$ and for $d=4k$, we have $\CF(\FC_d(n)) = \Omega(n^{\frac{d}{d+4}})=\Omega(n^{\frac{k}{k+1}})$.
            \item For every even dimension $d \geq 14$ with $d/2$ being odd, we have $\CF(\FC_d(n)) = \Omega(n^{\frac{k^*}{k^*+1}})$ where $k^*=\lfloor d/6 \rfloor$.
	\end{enumerate}
\end{theorem}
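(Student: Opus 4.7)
The plan is to encode any CF-coloring $\varphi:[n]\to[\chi]$ of $\FC_d(n)$ by the sequence of labels $\lambda(i)=\{\varphi(i),\varphi(i+1)\}$ on the consecutive pairs $P_i=\{i,i+1\}$, $i\in[n-1]$. By Gale's evenness criterion (Theorem~\ref{thm:gale}), any $d/2$ pairwise disjoint such pairs (i.e., positions $i_1<\cdots<i_{d/2}$ with $i_{j+1}\geq i_j+2$) form the vertex set of a facet of $\mathsf{C}_d(n)$, whether or not any pair touches $1$ or $n$. The central obstruction driving all three bounds is: if $d/2$ pairwise disjoint pairs share a common label $\lambda$, then in the resulting facet every color occurs at least twice (once per pair on each side for bichromatic $\lambda$, $d$ times for monochromatic $\lambda$), violating CF.

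For part (1), I would deduce from this obstruction that any fixed label is used by at most $d-2$ pairs. Indeed, if $\lambda$ appears at positions $i_1<i_2<\cdots<i_\ell$, then the ``sharing'' graph on these positions (edges precisely when $|i_j-i_k|=1$) is a disjoint union of paths, so a greedy selection yields an independent set of size at least $\lceil\ell/2\rceil$ of pairwise disjoint pairs with label $\lambda$; once $\ell\geq d-1$, this count reaches $d/2$ and the obstruction kicks in. Summing over all $\chi+\binom{\chi}{2}$ possible labels gives $n-1\leq(d-2)\bigl(\chi+\binom{\chi}{2}\bigr)$, and $\chi=\Omega_d(\sqrt{n})$ follows.

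For part (2), I would restrict to the odd-indexed pairs $P_1,P_3,P_5,\ldots$, which are pairwise disjoint for free, and form the color graph $G$ on vertex set $[\chi]$ whose edges are the distinct bichromatic labels $\lambda(i)$ realized at odd $i$. The structural claim is that $G$ is $C_{2k}$-free when $d=4k$: any cycle $a_1a_2\cdots a_{2k}a_1$ in $G$ would be witnessed by $2k$ distinct odd-indexed pairs (since the $2k$ cycle-edges are distinct and each pair realizes only its own label), and these pairs are pairwise disjoint, so by Gale's criterion they form a facet whose color multiset $\{a_1,a_2,a_2,a_3,\ldots,a_{2k},a_1\}$ assigns multiplicity exactly two to each $a_j$, contradicting CF. Applying the label-counting of part (1) to odd indices shows that each label is used at most $2k-1$ times on odd positions, so $G$ has $\Omega(n/k)$ edges. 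Combining with the Bondy--Simonovits bound $\mathrm{ex}(\chi,C_{2k})=O(\chi^{1+1/k})$ yields $\chi=\Omega(n^{k/(k+1)})$.

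For part (3), with $d/2$ odd a direct even-cycle argument is unavailable: the natural candidate $C_{d/2}$ is an odd cycle, so any bipartite color graph is $C_{d/2}$-free while having $\Omega(\chi^2)$ edges, producing no useful extremal bound. Instead, the plan is to forbid the generalized theta graph $\Theta_{k^*}$ --- three internally disjoint paths of length $k^*=\lfloor d/6\rfloor$ between two common endpoints --- in the color graph built as in part (2). A copy of $\Theta_{k^*}$ in $G$ supplies $3k^*$ pairwise disjoint odd-indexed pairs that already contribute multiplicity exactly two at each theta-vertex, and since $3k^*\leq d/2$ the remaining $d/2-3k^*$ pairs needed to complete a facet can be padded by additional disjoint pairs (using for instance monochromatic pairs or extra realizations of already-used labels) so that every color in the final $d$-element facet still appears at least twice. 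The Faudree--Simonovits bound $\mathrm{ex}(\chi,\Theta_{k^*})=O(\chi^{1+1/k^*})$ then yields $\chi=\Omega(n^{k^*/(k^*+1)})$. The main obstacle is exhibiting a padding scheme that works simultaneously for every CF-coloring; this is the delicate combinatorial heart of part (3) and is exactly what pins down the specific threshold $k^*=\lfloor d/6\rfloor$.
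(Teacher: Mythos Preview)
Your arguments for parts (1) and (2) are correct and match the paper's approach up to cosmetic details: the paper works directly with the matching $M_{[n]}=\{\{2i-1,2i\}:1\le i\le n/2\}$ rather than all consecutive pairs, which spares you the independent-set extraction step, but the core idea---the ``palette'' multigraph of color-pairs has bounded edge-multiplicity, and in the $d=4k$ case its simplification is $C_{2k}$-free, whence Bondy--Simonovits applies---is identical.

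Part (3), however, has a real gap, which you yourself flag. Your proposed $\Theta_{k^*,k^*,k^*}$ has only $3k^*$ edges, possibly strictly fewer than $l=d/2$, so the corresponding pairs do not fill out a facet; you need $l-3k^*$ extra disjoint pairs whose colors each already appear twice. Your candidate padding devices (monochromatic pairs, repeated labels) simply need not exist: nothing forbids a CF-coloring in which every pair $\{2i-1,2i\}$ is bichromatic with a \emph{distinct} label, so that the palette multigraph is already a simple loopless graph and there is nothing to pad with. The paper sidesteps this entirely by forbidding a theta graph with \emph{exactly} $l$ edges. Since $l$ is odd, one picks three odd integers $k_1,k_2,k_3$ that almost-equipartition $l$ (so $k_1+k_2+k_3=l$ and $|k_i-k_j|\le2$); the graph $\Theta_{k_1,k_2,k_3}$ then has $l$ edges and minimum degree $2$, hence is forbidden in the palette graph directly---no padding needed. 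The extremal bound $ex(c,\Theta_{k_1,k_2,k_3})=O(c^{1+1/k^*})$ with $k^*=\tfrac12\min_{i<j}(k_i+k_j)=\lfloor l/3\rfloor$ (Liu--Yang, for theta graphs whose path-lengths share a parity) then yields the result. So the threshold $k^*=\lfloor d/6\rfloor$ is not pinned down by a delicate padding scheme at all, but simply by how balanced a partition of $l$ into three odd parts can be.
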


\begin{theorem}[Sharp bounds on $\CF(\FC_d(n))$ for small even $d$]\label{thm:tight_upper}
For $d=4,6,8,10,12$ and $20$, we have:
\begin{align*}
&\CF(\FC_4(n))=\Theta(\sqrt{n}), &&\CF(\FC_6(n))=\Theta(\sqrt{n}), &&\CF(\FC_8(n))= \Theta(n^{2/3}),\\
 &\CF(\FC_{10}(n))=\Theta(\sqrt{n}), &&\CF(\FC_{12}(n))=\Theta(n^{3/4}), && \CF(\FC_{14}(n))=\Theta(n^{2/3}), \\
 &\CF(\FC_{18}(n))=\Theta(n^{3/4})\textrm{, and} &&\CF(\FC_{20}(n))=\Theta(n^{5/6}). &&
\end{align*}
\end{theorem}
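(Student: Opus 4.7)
Since Theorem~\ref{thm:lower_bound_CF} already supplies all eight lower bounds, the plan is to produce matching upper bounds. My unifying idea is to attach to each listed dimension $d$ an extremal bipartite ``color graph'' $G=G_d$ on $m$ vertices and to define the coloring $\varphi\colon[n]\to V(G)$ as a walk of length $n-1$ on $G$ in which each edge is used either once or exactly twice at two consecutive positions. Concretely I will take $G$ to be the complete bipartite graph $K_{\lceil m/2\rceil,\lfloor m/2\rfloor}$ (bipartite, $\Theta(m^{2})$ edges) for $d\in\{4,6,10\}$; the incidence graph of a projective plane (bipartite, girth $6$, $\Theta(m^{3/2})$ edges) for $d\in\{8,14\}$; the incidence graph of a generalized quadrangle (bipartite, girth $8$, $\Theta(m^{4/3})$ edges) for $d\in\{12,18\}$; and the incidence graph of a generalized hexagon (bipartite, girth $12$, $\Theta(m^{6/5})$ edges) for $d=20$. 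The identity $n-1=\Theta(|E(G_d)|)$ then gives $m$ equal to $\Theta(n^{1/2})$, $\Theta(n^{2/3})$, $\Theta(n^{3/4})$, or $\Theta(n^{5/6})$ respectively, matching the claimed exponents. For $d\in\{10,14,18\}$ the walk will additionally be required to alternate sides of the bipartition, so that no consecutive pair $\{\varphi(a),\varphi(a+1)\}$ is monochromatic.

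Conflict-freeness will then be verified via Gale's criterion (Theorem~\ref{thm:gale}): every generic facet of $\mathsf{C}_d(n)$ is a disjoint union of $p:=d/2$ consecutive pairs $\{a_{i},a_{i}+1\}$ with $a_{i+1}\geq a_{i}+2$. The ``at most twice consecutively'' condition on the walk forces these $p$ color-pair edges to be pairwise distinct edges of $G_d$, together with at most one loop representing a monochromatic pair (no loop at all when alternation is imposed). A uniquely colored element of the facet corresponds to a degree-$1$ vertex of the sub-multigraph $H\subseteq G_d$ formed by those $p$ edges, and a min-degree-$\geq 2$ configuration would force $H$ to contain a cycle of length at most $p$. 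A short girth-and-parity case analysis then rules this out: bipartiteness kills every union of odd cycles (and in particular $C_p$ when $p$ is odd), the girth of $G_d$ kills $C_p$ when $p$ is even together with all shorter even cycle-unions such as $C_4+C_{p-4}, C_6+C_{p-6},\ldots$, and the alternation restriction (for $d\in\{10,14,18\}$) kills the remaining ``$C_{p-1}+\textrm{loop}$'' configurations.

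The main technical hurdle I anticipate is the walk-construction step: producing a length-$(n-1)$ sequence that uses every edge of $G_d$ either once or twice at two consecutive positions. My plan is to take a Hamilton path of $G_d$ (which exists for the regular bipartite incidence graphs in question) as the ``backbone'' of the walk, and to insert a length-$2$ detour $v\to w\to v$ into the walk at each visit to a vertex $v$, for every non-backbone edge $\{v,w\}$ incident to $v$ not yet traversed. This uses every backbone edge exactly once and every non-backbone edge exactly twice at consecutive positions, giving a walk of length $2|E(G_d)|-|V(G_d)|+1=\Theta(|E(G_d)|)$. The constantly many non-generic Gale facets (those involving vertex~$1$ or vertex~$n$ as an odd end-block) are handled by a bounded local modification of $\varphi$ at its two endpoints and do not affect the asymptotic count.
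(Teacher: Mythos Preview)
Your plan is essentially the paper's: build a ``palette'' graph $G$ with the right girth/bipartiteness, color $[n]$ by an edge-covering walk on $G$, and use Gale's criterion to reduce conflict-freeness of a facet to the nonexistence of a $p$-edge subgraph of $G$ with minimum degree $\ge 2$. The paper packages this as Lemma~\ref{lemma:sufficient_condition_palette} (with the cleaner hypothesis ``no $C_j$ for $3\le j\le d/2$ except $j=d/2-1$'' in place of your case-by-case analysis), and you select exactly the same extremal graphs --- complete bipartite for $d\in\{4,6,10\}$, the projective-plane incidence graph for $d\in\{8,14\}$, and Benson's girth-$8$ and girth-$12$ incidence graphs for $d\in\{12,18\}$ and $d=20$.

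The one substantive weakness is your walk. The paper simply chooses parameters so that $G$ is Eulerian (e.g.\ an odd prime power $q$, making the $(q{+}1)$-regular incidence graphs have all even degrees) and takes an \emph{Eulerian circuit}: every edge is used exactly once, the walk is closed so the wrap-around edge $\{n,1\}$ of $C_{[n]}$ is handled for free, and distinctness of the $p$ palette edges is immediate. Your Hamilton-path-plus-detours scheme is correct in principle, but it hinges on the unproved assertion that the incidence graphs of generalized quadrangles and hexagons admit Hamilton paths; you simply assume this. Two smaller points: your ``loop'' and ``alternation'' worries are vacuous, since any walk on a simple bipartite graph already alternates sides and never gives $\varphi(a)=\varphi(a+1)$; and the non-generic facets are not ``constantly many'' (there are $\Theta(n^{p-1})$ of them) --- what is constant is the number of extra colors needed, since each such facet contains vertex~$1$ or~$n$. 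Replacing your walk by an Eulerian circuit, as the paper does, eliminates all three issues at once.
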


\begin{theorem}[Upper bounds for general even $d$]\label{thm:general_upper}
For even $d\geq 16$, let $k=\lfloor d/4 \rfloor$.
We have $\CF(\FC_d(n))=O(n^{1-\frac{2}{3k-1+\epsilon}})$ where $\epsilon=0$ when $k$ is odd and $\epsilon=1$ when $k$ is even. 
\end{theorem}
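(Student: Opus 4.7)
The plan is a greedy iterative ``color-and-peel'' strategy that produces one color class at a time. Starting from $V_0 = [n]$, at each step $i \geq 1$ I would identify a subset $S_i \subseteq V_{i-1}$ of size $\Omega(|V_{i-1}|^{2/(3k-1+\epsilon)})$ with the \emph{facet-independence} property \emph{within $V_{i-1}$}: every facet $F$ of $\mathsf{C}_d(n)$ with $F \subseteq V_{i-1}$ satisfies $|F \cap S_i| \leq 1$. I then assign a fresh color to all of $S_i$ and set $V_i := V_{i-1} \setminus S_i$. Summing the removal rate gives a total color count of $O(n^{1-2/(3k-1+\epsilon)})$, and the resulting coloring is a unique-maximum coloring, hence conflict-free.

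To find $S_i$ at each step, I would consider the auxiliary graph $G_i = (V_{i-1}, E_i)$ whose edges are the pairs $\{u,v\}$ contained in some facet $F \subseteq V_{i-1}$. The central lemma to prove is that $G_i$ is $K_{s,t}$-free for $s = (3k-1+\epsilon)/2$ and some constant $t = t(d)$. Once this is established, the K\H{o}v\'ari--S\'os--Tur\'an theorem yields $|E_i| = O(|V_{i-1}|^{2-1/s})$, and a standard averaging bound produces an independent set of size $\Omega(|V_{i-1}|^{1/s}) = \Omega(|V_{i-1}|^{2/(3k-1+\epsilon)})$, which serves as $S_i$.

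The combinatorial heart of the argument is the $K_{s,t}$-freeness claim. If $G_i$ contained $K_{s,t}$ with sides $L = \{l_1, \ldots, l_s\}$ and $R = \{r_1, \ldots, r_t\}$, then for each pair $(l_\alpha, r_\beta)$ there would exist a facet $F_{\alpha\beta} \subseteq V_{i-1}$ containing both $l_\alpha$ and $r_\beta$. Gale's evenness criterion rigidly pins down the even-block skeleton of each such $F_{\alpha\beta}$; a careful counting of how these blocks can be positioned around the common vertices $L \cup R$ should show that once $s \geq (3k-1+\epsilon)/2$, such a configuration cannot fit within the $d$-element budget of any single facet, producing a contradiction. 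The parity of $k = \lfloor d/4 \rfloor$ controls whether an extra parity-fixing even block is required in the skeleton, yielding the adjustment $\epsilon = 1$ when $k$ is even and $\epsilon = 0$ when $k$ is odd.

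The main obstacle will be establishing the $K_{s,t}$-freeness rigorously: the rigidity-and-counting argument generalizes the tight analyses for small dimensions underlying Theorem \ref{thm:tight_upper}, but must be carried out uniformly in $d$ and must correctly predict the parity of the ``extra block'' needed. A secondary complication is the treatment of boundary facets containing the endpoints $1$ or $n$ of $[n]$ — whose incident blocks may have arbitrary size under Gale's criterion — which I would absorb into the constant $t$ in the $K_{s,t}$-freeness statement by allowing a constant number of exceptional facet extensions per common pair.
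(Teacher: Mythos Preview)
Your central lemma is false already at the first step. With $V_0=[n]$, the auxiliary graph $G_1$ is the \emph{complete} graph on $[n]$: by Gale's criterion, for $n>d$ any two vertices $u<v$ lie in a common facet (e.g.\ choose an edge of $C_{[n]}$ through $u$, an edge of $C_{[n]}$ through $v$, and fill in $d/2-2$ further disjoint edges elsewhere). Hence $G_1$ contains $K_{s,t}$ for every fixed $s,t$, the K\H{o}v\'ari--S\'os--Tur\'an bound gives no information, and---more directly---the independence number of $G_1$ equals $1$. Your peeling therefore removes a single vertex per color and produces nothing better than the trivial bound $\CF(\FC_d(n))\le n$. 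The ``rigidity-and-counting'' argument you sketch cannot rescue this: the facets $F_{\alpha\beta}$ witnessing the $K_{s,t}$ need not coincide or overlap in any prescribed way, so no $d$-element budget is ever violated.

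The paper's proof is entirely different and constructive. One takes a known dense high-girth graph---the Lazebnik--Ustimenko--Woldar graph $CD(2k-3,q)$ with $q$ a power of $2$, which is connected, $q$-regular (hence Eulerian), and of girth at least $2k+2\ge d/2+1$---and walks an Eulerian circuit of it to produce a coloring $\varphi:[n]\to V(G)$ with $n=|\cE(G)|$. Lemma~\ref{lemma:sufficient_condition_palette} then certifies that the absence of cycles of length $\le d/2$ in $G$ forces $\varphi$ to be conflict-free on $\FC_d(n)$; the exponent $1-\tfrac{2}{3k-1+\epsilon}$ falls out of the vertex/edge count of $CD(2k-3,q)$. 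The argument uses the girth \emph{lower-bound} side of the Erd\H{o}s conjecture, not any Tur\'an-type or independent-set reasoning.
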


In fact, this particular case has a surprising connection to the Erd\H{o}s girth conjecture in extremal graph theory \cite{erdos_girth_conjecture}. The girth conjecture posits the maximum possible number of edges in a graph with given girth, a classical open problem in extremal combinatorics, for general backgrounds for the conjecture and related Tur{\'a}n type problems, see \cite{furedi_simonovits_degenerate_extremal_survey}. This connection indicates that $\CF(\FC_d(n))$ are highly non-trivial to determine for general dimensions. The connection also implies that improving the bounds on $\CF(\FC_d(n))$ would imply new bounds on the girth conjecture, making it of independent interest.

\smallskip 

\textbf{2-interval hypergraphs.} 
The \textit{2-interval hypergraph} on vertex set $[n]$, denoted by $\II_n^2$, is a hypergraph where the hyperedges are union of two (discrete) intervals with a total cardinality at least $3$. It is well-known that $\gamma_d$ crosses a hyperplane of $\mathbb{R}^d$ in at most $d$ distinct points. Hence, $\II_n^2$ can be regarded as a sub-hypergraph of $\Hm{4}{n}$ for Question \ref{question_main}. In fact, one can easily see that $\II_n^2$ contains (asymptotically) almost all the hyperedges of $\Hm{4}{n}$ of size at least 3.

Note also that $\II^2_n$ is a ``larger" hypergraph than 
$\FC_4(n)$; in size the former has $\Theta(n^4)$ hyperedges 
while the latter has $\Theta(n^2)$ hyperedges. Also $\II^2_n$ has a loose restriction for hyperedges than $\FC_4(n)$. Even so, the following Theorem shows that $\CF(\II^2_n)$ and $\CF(\FC_4(n))$ are asymptotically the same.

\begin{theorem} \label{thm:2-interval}
	$\CF(\II^2_n)=\Theta(\sqrt{n})$.
\end{theorem}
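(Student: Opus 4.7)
The lower bound $\CF(\II^2_n)\geq\Omega(\sqrt n)$ is immediate from the discussion preceding the theorem: every $4$-subset of the form $\{a,a+1\}\cup\{b,b+1\}$ with $1<a$ and $b+1<n$ is simultaneously a facet of $\FC_4(n)$ (by Gale's evenness criterion) and a $2$-interval hyperedge of $\II^2_n$. Any CF-coloring of $\II^2_n$ therefore restricts to a CF-coloring of a sub-hypergraph of $\FC_4(n)$ that already requires $\Omega(\sqrt n)$ colors by Theorem~\ref{thm:lower_bound_CF}(1).

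For the upper bound I plan to exhibit an explicit coloring $\varphi:[n]\to[k]$ with $k=O(\sqrt n)$ colors. The construction refines the $\FC_4$ upper bound (where each block $\{2i-1,2i\}$ is labeled by a distinct $2$-subset of $[k]$) by choosing a \emph{global} ordering of those $2$-subsets coming from Walecki's Hamilton decomposition, so that the extra hyperedges of $\II^2_n$ (single intervals of size $\geq 3$ and misaligned $2$-intervals) also have a uniquely colored vertex. Concretely, pick an odd $k$ with $k(k-1)\geq n$; Walecki's theorem decomposes $K_k$ into $m:=(k-1)/2$ edge-disjoint Hamilton cycles $H_0,\dots,H_{m-1}$, each obtained from a common base cycle $H_0$ by a cyclic rotation on $\mathbb{Z}/k$. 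List the $\binom k2$ edges of $K_k$ in \emph{Walecki order} $E_1,E_2,\dots,E_{\binom k2}$: the edges of $H_0$ in cyclic order, then those of $H_1$, and so on. Orient each $E_i$ into an ordered pair $(a_i,b_i)$ using an \emph{alternating} rule (flipping orientation on every second edge of each Hamilton cycle, and also at the transitions between consecutive Hamilton cycles) and set $\varphi(2i-1)=a_i$, $\varphi(2i)=b_i$ for each block $i$.

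The verification proceeds by a case analysis on the shape of a hyperedge $e\in\II^2_n$. For \emph{aligned} $2$-interval hyperedges $\{2i-1,2i\}\cup\{2j-1,2j\}$ the multiset of colors is $\{a_i,b_i,a_j,b_j\}$; since Walecki's decomposition assigns distinct $2$-subsets to distinct blocks, this multiset has at least three distinct values and hence some uniquely colored vertex, exactly as in the $\FC_4$ argument. For single intervals of size $3$ failure would require all three colors equal, which is ruled out by the orientation rule (it prevents $b_i=a_{i+1}=b_{i+1}$ and $a_i=b_i=a_{i+1}$). For single intervals of size $4$ starting at an even position, the four colors are $(b_i,a_{i+1},b_{i+1},a_{i+2})$ and involve three consecutive Walecki edges; the alternating orientation is designed precisely so that the ``bad'' multiset $\{x,x,y,y\}$ — the only possible CF obstruction — cannot occur, because Walecki's consecutive-edges-share-a-vertex pattern is broken at exactly the points where orientation switches.

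The main obstacle is extending this local analysis to \emph{long} single intervals (length $\gg 4$) and to \emph{misaligned} $2$-intervals whose component intervals straddle block boundaries. Here I plan to exploit the spread of each color class: each $v\in[k]$ is incident to $k-1$ edges in $K_k$, so $\varphi^{-1}(v)$ has size $k-1=\Theta(\sqrt n)$, and Walecki's rotational symmetry distributes these $k-1$ occurrences uniformly across the $m$ Hamilton-cycle blocks. From this distribution one can show that for every interval or pair of intervals the set of colors with odd (and in particular singleton) multiplicity is nonempty — this is a parity/combinatorial argument on intersections of intervals with a translated copy of the Walecki pattern, where the rotational structure ensures that the ``even multiplicities only'' condition cannot hold simultaneously for every color. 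Verifying this spread property uniformly over all hyperedge shapes, and in particular carefully handling the ``crossing'' case when a component interval straddles a transition between two Hamilton cycles of the decomposition, is the step I expect to require the bulk of the technical work.
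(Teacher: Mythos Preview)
Your lower bound is fine and matches the paper's. The upper-bound construction, however, has a concrete failure.

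In your coloring, blocks $\{2i-1,2i\}$ are labeled by the edges $E_1,\dots,E_{\binom{k}{2}}$ of $K_k$ listed Hamilton cycle by Hamilton cycle. Consider the single interval $I=\{1,2,\dots,2k\}$. The corresponding edges are $E_1,\dots,E_k$, i.e.\ exactly the edge set of the first Hamilton cycle $H_0$. Every vertex of $K_k$ has degree~$2$ in $H_0$, so the multiset of colors on $I$ is each of the $k$ colors with multiplicity exactly~$2$; there is no uniquely colored vertex. The orientation rule is irrelevant here, since it only permutes $a_i$ and $b_i$ within a block and does not change the color multiset. The same failure occurs for any interval that sweeps out one (or several) full Hamilton cycles, and for $2$-intervals such as $\{1,\dots,2k\}\cup\{2k+1,\dots,4k\}$. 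So your promised ``parity/combinatorial argument'' that the even-multiplicities-only configuration cannot occur is simply false for this scheme; the obstruction is intrinsic to listing the edges cycle by cycle, not a matter of careful case analysis.

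The paper avoids this by organizing the coloring quite differently. It takes $n=2k^2-1$, inserts $k-1$ \emph{separator} vertices each with its own private color, and between them places $k$ blocks of length $2k-1$. Block $B_i$ is colored by reading off the vertices of the $i$th Walecki \emph{Hamiltonian path} $Z_i$ in $K_{2k}$, so each of the $2k$ palette colors appears at most once per block. Then: any interval containing a separator is trivially CF; two intervals inside the same block are CF since colors there are distinct; and for intervals $I_1\subset B_{i_1}$, $I_2\subset B_{i_2}$ with $i_1\neq i_2$, a short geometric argument (the ``interior edges'' of $\conv(\varphi(I_j))$ in the regular $2k$-gon determine the zigzag path $Z_{i_j}$ uniquely) shows $\varphi(I_1)\neq\varphi(I_2)$. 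The separators are exactly what absorbs the long-interval and boundary-crossing cases that break your scheme.
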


From Theorem \ref{thm:2-interval} it follows that when $d=4$ every sub-hypergraph in Question \ref{question_main} has the CF-chromatic number $O(\sqrt{n})$. 

\subsection{Organization of the paper.} In Section \ref{section:cyclic}, after observing some basic facts on colorings of $\FC_d(n)$, we prove Theorems \ref{thm:lower_bound_CF}, \ref{thm:tight_upper} and \ref{thm:general_upper}. In the same section, we briefly discuss CF-colorings of a natural generalization of $\FC_d(n)$ which concerns the disjoint union of paths on $r$ vertices on a cycle. In Section \ref{sec:2-interval} we discuss CF-colorings of the 2-interval hypergraph $\mathsf{I}_n^2$ and prove Theorem \ref{thm:2-interval}. In Section \ref{sec:final_remarks} we state several open problems.

\section{CF-chromatic numbers of cyclic polytopes} \label{section:cyclic}

\subsection{Preliminaries}

The following is a well-known combinatorial criterion which characterizes the facets of $\mathsf{C}_d(n)$.

\begin{theorem}[Gale's evenness criterion] \label{thm:gale}
A $d$-subset $S$ of $[n]$ forms a facet of $\mathsf{C}_d(n)$ if and only if  the set $\{k\in S:i<k<j \}$ has even size for every $i, j \in [n]\setminus S$. Namely, every maximal contiguous subset of $S$ which contains neither $1$ nor $n$ has even size.
\end{theorem}

Now we state some basic observations. Without loss of generality we assume that $n \geq d+1$.

\begin{proposition}[Proper colorings] \label{prop:proper_cyclic}
	For $n\geq 3$, we have $\chi(\FC_2(n))=3$ when $n$ is odd and $\chi(\FC_2(n))=2$ when $n$ is even. For $d\geq 3$ and $n \geq d+1$, $\chi(\FC_d(n))=2$. 
\end{proposition}
\begin{proof}
The first claim is obvious since $\FC_2(n)$ is a cycle of length $n$. When $d\geq 3$, we alternately color RED and BLUE by coloring odd elements of $[n]$ by RED and even elements of $[n]$ by BLUE. By Theorem \ref{thm:gale}, every hyperedge of $\FC_d(n)$ should contain an interval of size 2. Hence, it must contain two colors.
\end{proof}

\begin{proposition}[CF-colorings in odd dimensions] \label{prop:CF_odd_cyclic}
For $n \geq 4$, $\CF(\FC_3(n))=2$. For odd $d\geq 5$ and $n \geq d+1$, $\CF(\FC_d(n))=3$.
\end{proposition}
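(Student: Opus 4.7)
For $d = 3$, I would color vertices $1$ and $n$ with color A and every other vertex with color B. By Gale's evenness criterion, the facets of $\mathsf{C}_3(n)$ are exactly the triples $\{1, i, i+1\}$ for $2 \leq i \leq n-1$ and $\{i, i+1, n\}$ for $1 \leq i \leq n-2$; each contains $1$ or $n$, and a direct check shows each is CF (vertex $1$ or $n$ is uniquely A in the ``generic'' facets, and the middle vertex is uniquely B in the two exceptional facets $\{1, 2, n\}$ and $\{1, n-1, n\}$). For odd $d \geq 5$, Gale's criterion forces every facet of $\mathsf{C}_d(n)$ to contain $1$ or $n$: otherwise the facet would be a disjoint union of middle blocks, each of even size, contradicting $|F| = d$ odd. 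Coloring $1$ with A, $n$ with B, and everything else with C then yields a CF $3$-coloring. The trivial bound $\CF(\FC_3(n)) \geq 2$ completes the $d=3$ case.

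\paragraph{Lower bound for odd $d \geq 5$: key rigidity.} Assume for contradiction that $\varphi : [n] \to \{0, 1\}$ is a $2$-CF coloring. The main tool is the family of ``two-block'' facets
\[
F_a = \{1, 2, \ldots, d-a\} \cup \{n-a+1, \ldots, n\}, \qquad 0 \leq a \leq d,
\]
each a facet by Gale. Let $N_a$ be the number of $1$'s in $F_a$. The CF condition forces $N_a \in \{1, d-1\}$, while $F_{a+1}$ differs from $F_a$ only by swapping the vertex $d-a$ for $n-a$, so $|N_{a+1} - N_a| \leq 1$. Because $d \geq 5$ implies the gap $d-2 \geq 3$ between the two allowed values, the sequence $(N_a)_{a=0}^{d}$ cannot jump between them and must therefore be constant. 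Taking $N_a \equiv 1$ without loss of generality, we obtain a unique index $i_0 \in [d]$ with $\varphi(i_0) = 1$ and $\varphi(j) = 0$ for $j \in [d] \setminus \{i_0\}$, together with the ``reflection symmetry'' $\varphi(n-d+i) = \varphi(i)$ for all $i \in [d]$.

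\paragraph{Closing the contradiction.} For the boundary case $n = d+1$ the argument reduces to a direct pigeonhole: every $d$-subset of $[d+1]$ is a facet, and CF simultaneously requires the number $k$ of $0$'s to satisfy $k \in \{1, d-1\}$ (from any $1$-vertex) and $k \in \{2, d\}$ (from any $0$-vertex), which is impossible for $d \geq 5$ whenever both colors appear, while the monochromatic cases fail trivially. For $n > d+1$, I plan to further constrain $\varphi$ using more facet families: the single-middle-block facets $\{1\} \cup \{j, j+1, \ldots, j+d-2\}$ for $j \in [3, n-d+1]$ (each a facet since $d-1$ is even) yield, by the same telescoping, that $\varphi$ is $(d-1)$-periodic on $[3, n-1]$; the facet $\{2, 3, \ldots, d, n\}$ forces $\varphi(d) = \varphi(1)$, hence $i_0 \notin \{1, d\}$; and the ``skip-one'' facets $\{1, 2, \ldots, k, k+2, \ldots, d+1\}$ for odd $k \in [1, d-2]$ force all even positions in $[2, d-1]$ to equal $\varphi(d+1)$, excluding $i_0$ from those positions. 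The resulting highly rigid coloring is then shown to fail CF on a concrete small facet — for instance, for $d=5$ the facet $\{1, 2, 3, j, j+1\}$ with $j$ chosen so that $(\varphi(j), \varphi(j+1))$ is bichromatic (which the periodicity guarantees as long as $n \geq 8$) has counts $(2, 3)$, violating the required $(1, d-1)$ split; an analogous facet exists for each odd $d \geq 7$. The main obstacle is coordinating these constraints uniformly in $n$ and across the allowable positions of $i_0$, but the rigidity pinned down by the telescoping $(N_a)$ argument ensures that a single additional well-chosen facet always provides the contradiction.
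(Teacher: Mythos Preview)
Your upper-bound arguments are fine (your $d=3$ coloring differs from the paper's alternating coloring but works equally well, and your $d\geq 5$ coloring is identical to the paper's). The divergence is entirely in the lower bound for odd $d\geq 5$, where you take a genuinely different and much heavier route.

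The paper's argument is a two-line observation: for odd $d\geq 5$, any two disjoint size-$2$ intervals $I_1,I_2\subset[n]$ can be extended to a facet (add an odd-length block through $1$ or $n$ and, if needed, further even middle blocks), so if some such $I_1\cup I_2$ contains two vertices of each color, that facet has every color repeated and CF fails. It then does a short case analysis on the number of monochromatic runs in $[n]$ to show that such a bichromatic pair of $2$-intervals always exists unless one color class has at most one vertex, in which case a monochromatic facet kills CF directly. No telescoping, no periodicity, no case split on $n$.

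Your approach—tracking the count $N_a$ along the sliding facets $F_a$, deducing $N_a$ is constant, then layering on periodicity from the $\{1\}\cup\{j,\dots,j+d-2\}$ family and parity constraints from the ``skip-one'' facets—does produce real rigidity, and the pieces you write out (the $|N_{a+1}-N_a|\leq 1$ telescoping, the $n=d+1$ simplex case, the exclusion $i_0\notin\{1,d\}$, the exclusion of even $i_0$) are correct. But the endgame is where the proposal stops being a proof: ``an analogous facet exists for each odd $d\geq 7$'' is asserted, not shown, and you yourself flag that coordinating the constraints uniformly over $n$ and over the surviving positions of $i_0$ is the obstacle. For each fixed small $d$ one can indeed chase this down, but a clean uniform argument is not on the page. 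By contrast, the paper's ``two disjoint bichromatic $2$-intervals'' idea handles all odd $d\geq 5$ and all $n\geq d+1$ in one stroke, and is worth internalizing: once you notice that any facet containing two vertices of each color automatically fails CF in a $2$-coloring, the problem reduces to a trivial Ramsey-type statement about color runs in $[n]$.
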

\begin{proof} Since $\FC_3(n)$ is 3-uniform, a proper 2-coloring in Proposition \ref{prop:proper_cyclic} also gives a CF-coloring of $\FC_3(n)$. So the first claim follows. 
	
It remains to show the case for odd $d \geq 5$. We first show the upper bound. The following is a CF-coloring of $\FC_d(n)$ with colors $\{\text{RED}, \text{BLUE}, \text{GREEN}\}$: we color 1 by RED, $n$ by BLUE, and the other vertices by GREEN. This is a CF-coloring since every hyperedge contains either 1 or $n$ by Theorem \ref{thm:gale} and these end vertices have unique colors RED and BLUE. 

Next, we show the lower bound. Let us consider an arbitrary 2-coloring of $\FC_d(n)$ by RED and BLUE. We show that this coloring is not a CF-coloring. Note that by Theorem \ref{thm:gale}, for any two disjoint intervals $I_1$ and $I_2$ of size 2 in $[n]$, there is a hyperedge of $\FC_d(n)$ which contains both $I_1$ and $I_2$ as subsets. Hence, we are done if we can find two intervals $I_1$ and $I_2$ of size 2 such that $I_1\cup I_2$ has 2 RED vertices and 2 BLUE vertices. Suppose otherwise.

Let $m$ be the maximum length of an increasing sequence in $[n]$ where a consecutive pair of elements have different colors from the 2-coloring. We cannot have $m\geq 4$, otherwise we can find a pair of disjoint intervals of size 2 we forbid. For the case when $m=3$, without loss of generality, we assume the color alternates RED-BLUE-RED when we move from 1 to $n$. We cannot have two BLUE vertices, otherwise we can again find a pair of intervals we forbid. Hence, there is only one BLUE vertex. Then, by Theorem \ref{thm:gale}, we can find a hyperedge of $\FC_d(n)$ which only consists of RED vertices. A similar argument holds when $m\leq 2$. This completes the proof.
\end{proof}
Hence in what follows we only consider even dimensions $d$.

\subsection{Palette graphs and lower bounds on  \texorpdfstring{$\CF(\FC_d(n))$}{Xcf(FCd(n))} for even \texorpdfstring{$d \geq 4$}{d>=4}}\label{sec:lower-bound}
In this subsection we prove Theorem \ref{thm:lower_bound_CF}. First, we prove Part (1).

\begin{proof}[Proof of Theorem \ref{thm:lower_bound_CF} (1)]
It is enough to prove it for the case when $n$ is even. For fixed $d=2l \geq 4$ and even $n\geq d+1$, put $H=\FC_d(n)$.
Suppose we are given a CF-coloring $\varphi: [n] \to [c]$ of $H$.
By Theorem \ref{thm:gale}, every $d$-subset of $[n]$ of the form $\{2i_1-1,2i_1,2i_2-1,2i_2, \dots, 2i_l-1,2i_l\}$ for some  $1 \leq i_1 < i_2 < \cdots <i_l \leq \frac{n}{2}$, is a hyperedge of $H$.
Note that we cannot have all the $l$ sets $\{\varphi(2i_1-1),\varphi(2i_1)\}$, $\{\varphi(2i_2-1),\varphi(2i_2)\}$, $\dots, \{\varphi(2i_l-1),\varphi(2i_l)\}$ be pairwise equal.
For otherwise the corresponding hyperedge has each color appearing at least $l$ times.
Since there are $\frac{n}{2}$ such sets $\{2i-1,2i\}$ and at most $\binom{c}{2}+c$ unordered pairs of colors (not necessarily distinct) we have that
\[ \frac{n}{2} \leq \frac{c^2+c}{2}\cdot (l-1),\]
	so $c = \Omega(\sqrt{n})$. 
\end{proof}

Next, we introduce the following notion of a palette graph that will be useful for us:
\begin{definition} \label{def:palette}
Given a (simple) graph $G=(V,E)$ and a coloring $\varphi: V \to [c]$, the {\em palette graph} of $G$ with respect to $\varphi$, denoted by $P_{G,\varphi}$, is the multigraph $H=([c],\F)$ such that
 there is a bijection $\psi: E \to \F$ where, for every edge $e=\{v_1, v_2\}$, $\psi(e)$ is an edge between $\varphi(v_1)$ and $\varphi(v_2)$.
\end{definition}

For a positive integer $n$, let \[M_{[n]}=([n], \{\{2i-1,2i\}: \textrm{$i$ is an integer with }1 \leq i \leq n/2\}).\] The main ingredient in the proof of Theorem \ref{thm:lower_bound_CF} (1) can be restated using the palette graph $P_{M_{[n]}, \varphi}$. Namely, for any two colors $i$ and $j$ of $\varphi$, not necessarily distinct, a multi-edge $\{i,j\}$ has multiplicity at most $d/2-1$ in $P_{M_{[n]}, \varphi}$. 
	
In proving Parts (2) and (3) of Theorem \ref{thm:lower_bound_CF}, we also show that a certain restriction on a palette graph is unavoidable. Note that in a multi-graph $H$ and a vertex $v$ of $H$, a loop incident to $v$ is counted twice in the degree of $v$  and other edges incident to $v$ are counted exactly once. 

\begin{lemma}\label{lemma_lower_no_subgraph}
Let $d=2l$ and $n\geq d+1$. Let $\varphi:[n]\to [c]$ be a CF-coloring of $\FC_d(n)$ using $c$ colors. The palette graph $P=P_{M_{[n]},\varphi}$ contains no submultigraphs with exactly $l$ edges and minimum degree at least $2$.
\end{lemma}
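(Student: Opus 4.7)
\textbf{Proof plan for Lemma~\ref{lemma_lower_no_subgraph}.} The strategy is to argue by contradiction: a sub-multigraph of $P = P_{M_{[n]},\varphi}$ with exactly $l$ edges and minimum degree $\geq 2$ would directly conflict with the CF-property of $\varphi$ on a specific facet produced by Gale's criterion. The paragraph preceding the lemma already observed that any $d$-set of the form $\{2i_1-1,2i_1,2i_2-1,2i_2,\dots,2i_l-1,2i_l\}$ with $1\leq i_1<\cdots<i_l\leq n/2$ is a hyperedge of $\FC_d(n)$, which is exactly the translation mechanism we will need.

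First, I would set up the dictionary between sub-multigraphs of $P$ on $l$ edges and hyperedges of $\FC_d(n)$. Each edge of $P$ is, by definition, the image under $\psi$ of a unique matching edge $\{2i-1,2i\}$ of $M_{[n]}$. Thus a sub-multigraph $Q \subseteq P$ with $l$ edges corresponds to a choice of $l$ matching edges $\{2i_j-1,2i_j\}$, $j=1,\dots,l$, and their union $e=\bigcup_{j=1}^{l}\{2i_j-1,2i_j\}$ is a $d$-element hyperedge of $\FC_d(n)$ by Gale's evenness criterion.

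Next, I would use the CF-property of $\varphi$ applied to $e$: there is a vertex $v\in e$ whose color, call it $a$, is unique in $e$. This vertex $v$ lies in exactly one of the selected pairs, say $\{2i_{j_0}-1,2i_{j_0}\}$. I would then compute the degree of $a$ in $Q$. The key counting claim is:
\begin{equation*}
\deg_{Q}(a) \;=\; \#\bigl\{w\in e : \varphi(w)=a\bigr\} \;=\; 1,
\end{equation*}
where the first equality uses the standard convention that a loop contributes $2$ to the degree (which matches counting color-$a$ endpoints in the selected matching pairs), and the second equality uses uniqueness of the color $a$ in $e$. For the $j_0$-th pair, exactly one endpoint is colored $a$ (it cannot be both, since then $a$ would appear twice in $e$), and for every other $j\neq j_0$ neither endpoint of the pair is colored $a$. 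So the $j_0$-th palette edge is a proper edge incident to $a$ contributing exactly $1$ to $\deg_Q(a)$, and all other palette edges in $Q$ contribute $0$. Since $a$ is an endpoint of an edge of $Q$, it lies in $V(Q)$, and thus the minimum degree of $Q$ is at most $1$, contradicting the assumption.

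I do not anticipate a real obstacle here; the only mild care needed is the bookkeeping for loops (a pair $\{2i-1,2i\}$ with both endpoints colored $a$ would be a loop at $a$ contributing $2$ to $\deg_Q(a)$, but this case is immediately ruled out by the uniqueness of $a$ in $e$), and handling the trivial boundary case $n$ odd (where $M_{[n]}$ just omits vertex $n$, and the whole argument is unaffected). Once the correspondence ``edges of $Q$ $\leftrightarrow$ matching pairs $\leftrightarrow$ facet of $\mathsf{C}_d(n)$'' is written down cleanly, the contradiction reduces to the one-line degree count above.
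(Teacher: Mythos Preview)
Your proof is correct and follows essentially the same approach as the paper: both argue by contradiction, pull back the $l$ edges of the offending sub-multigraph to $l$ matching pairs in $M_{[n]}$, observe that their union is a facet of $\mathsf{C}_d(n)$ via Gale's criterion, and then use that a color appearing uniquely in this facet would force a degree-$1$ vertex in the sub-multigraph. The paper states the last step in one line (``the degree condition on $H$ implies that $e$ does not attain a unique color''), whereas you make the degree count explicit, including the loop bookkeeping; this extra care is fine and does not change the argument.
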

\begin{proof}
Assume to the contrary that there is a submultigraph $H$ in $P$ with the specified conditions. The $l=d/2$ edges of $H$ correspond to distinct edges $e_1, \dots, e_l$ of $M_{[n]}$ by definition. Note that $e=e_1\cup e_2 \cup \cdots \cup e_l$ is a hyperedge of $\FC_d(n)$, and the degree condition on $H$ implies that $e$ does not attain a unique color by $\varphi$, a contradiction. This completes the proof.
\end{proof}

We also need the following lemma which seems to be of independent interest in extremal graph theory:
\begin{lemma} \label{lemma:upper_on_palette}
Let $G$ be a simple graph on $c$ vertices which contains no subgraphs with exactly $l$ edges and minimum degree at least $2$. Then,
     \[
    |\cE(G)| = \left\{\begin{array}{ll}
        O(c^{1+(1/k)}) & \text{when $l$ is even,}\\
        O(c^{1+(1/k^*)}) & \text{when $l$ is odd and $l\geq 7$,}
        \end{array}\right.
  \]
where $k=l/2$ and $k^*=\lfloor l/3 \rfloor$.
\end{lemma}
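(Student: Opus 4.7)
The plan is to display, for each parity of $l$, a single well-studied subgraph that lies in the forbidden family and then read off the bound from a classical Tur\'an-type estimate. For $l=2k$ even, the cycle $C_{2k}$ itself has exactly $l$ edges and minimum degree $2$, so $C_{2k}$ is forbidden; thus $G$ is $C_{2k}$-free and the Bondy--Simonovits theorem gives $|\cE(G)| \leq ex(c, C_{2k}) = O(c^{1+1/k})$, which is exactly the claimed bound.

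For $l$ odd with $l \geq 7$, I would set $k^* = \lfloor l/3 \rfloor$ and write $l = 3k^* + r$ with $r \in \{0, 1, 2\}$, and then isolate the theta graph $T_r$ consisting of three internally disjoint paths between two fixed apices of lengths
\[
(k^*, k^*, k^*),\qquad (k^*, k^*, k^*+1),\qquad\text{or}\qquad (k^*, k^*+1, k^*+1)
\]
when $r=0$, $1$, or $2$, respectively. In every case $T_r$ has exactly $3k^* + r = l$ edges, its two apices have degree $3$, every internal path vertex has degree $2$, and since $l \geq 7$ forces $k^* \geq 2$ each path has length at least $2$, so $T_r$ is a simple graph and therefore belongs to the forbidden family. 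Hence $G$ contains no copy of $T_r$, and invoking the Faudree--Simonovits extremal bound for three-path theta graphs, $ex(c, \Theta(a, b, c)) = O(c^{1+1/\min(a,b,c)})$, with $\min(a,b,c) = k^*$, yields $|\cE(G)| = O(c^{1+1/k^*})$.

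The only nontrivial ingredient is the extremal bound on the possibly unbalanced thetas $T_r$ with $r \in \{1, 2\}$; for $r = 0$ one is dealing with the balanced theta $\Theta_{3, k^*}$ and the bound is classical. For the unbalanced cases one can either cite a suitable generalization or argue directly: a Jensen/Cauchy--Schwarz count on $k^*$-walks shows that in any graph with $\omega(c^{1+1/k^*})$ edges some vertex pair is joined by many internally disjoint paths of length $k^*$, and with the same density one also finds a further internally disjoint path of length $k^*$ or $k^*+1$ between the same pair, which can be assembled into $T_r$. The delicate step is the simultaneous disjointness bookkeeping that ensures all three chosen paths are internally disjoint in the host graph.
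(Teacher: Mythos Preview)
Your treatment of the even case is exactly the paper's: forbid $C_{2k}$ and quote Bondy--Simonovits. The odd case also has the right shape---exclude a theta graph on $l$ edges and cite a Tur\'an bound---but your particular choice of path lengths breaks the argument.

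When $l$ is odd and $r=l-3k^*\in\{1,2\}$, your path-length triple $(k^*,k^*,k^*+1)$ or $(k^*,k^*+1,k^*+1)$ mixes parities: since $l=3k^*+r$ is odd, $r=1$ forces $k^*$ even and $r=2$ forces $k^*$ odd, so in both cases two lengths are of one parity and the third of the other. A theta graph whose path lengths are not all congruent modulo $2$ contains an odd cycle and is therefore not bipartite. But then $K_{\lfloor c/2\rfloor,\lceil c/2\rceil}$ avoids it, giving $ex(c,T_r)\ge c^2/4$, so the asserted bound $ex(c,\Theta(a,b,d))=O(c^{1+1/\min(a,b,d)})$ is simply false in these cases. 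No amount of path-counting will rescue the direct argument either, for the same reason: the complete balanced bipartite graph is a counterexample.

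The paper's proof fixes exactly this point. For odd $l\ge 7$ it partitions $l$ into three \emph{odd} parts $k_1,k_2,k_3$ with $|k_i-k_j|\le 2$ (e.g.\ $7=1+3+3$, $11=3+3+5$, $13=3+5+5$), so that $\Theta_{k_1,k_2,k_3}$ is bipartite. It then invokes the result of Liu--Yang on generalized theta graphs with same-parity path lengths, which yields $ex(c,\Theta_{k_1,k_2,k_3})=O(c^{1+1/k^*})$ with $k^*=\tfrac{1}{2}\min_{i<j}(k_i+k_j)=\lfloor l/3\rfloor$. Replacing your triples by these all-odd almost-equipartitions makes your outline go through.
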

To prove Lemma~\ref{lemma:upper_on_palette} we apply several known results from extremal graph theory. Before proceeding to the proof of Lemma~\ref{lemma:upper_on_palette}
we first provide the proof of Theorem \ref{thm:lower_bound_CF} (2) and (3).

\begin{proof}[Proof of Theorem \ref{thm:lower_bound_CF} (2) and (3)]
We only show it for the case when $n$ is even; the other case will follow similarly. Given an even dimension $d=2l \geq 4$ and $n \geq d+1$, put $H= \FC_{d}(n)$. Suppose that $\varphi: [n]\to [c]$ is a CF-coloring of $H$. Put $P=P_{M_{[n]}, \varphi}$.

Let $G$ be the simple graph obtained from $P$ by removing all loops and leaving exactly one copy of a multi-edge 
for each pair of distinct vertices of $P$ if such pair has an edge in $P$. Note that in $P$, by Lemma \ref{lemma_lower_no_subgraph}, the multiplicity of any  edge (or loop) is at most $l-1=d/2-1$. Therefore, we obtain the following inequality:
\[n/2=|\cE(P)| \leq (l-1)|\cE(G)|+(l-1)c.\] 

Note also that $G$ satisfies the condition of Lemma \ref{lemma:upper_on_palette} by applying Lemma \ref{lemma_lower_no_subgraph} on $P$. So, by Lemma \ref{lemma:upper_on_palette}, we get $n=O(c^{1+(1/\tilde{k})})$ or alternatively
$$
c = \Omega(n^{\frac{\tilde{k}}{\tilde{k}+1}})
$$ where $\tilde{k}=d/4$ when $l$ is even, and $\tilde{k}=\lfloor d/6 \rfloor$ when $l$ is odd and $l\geq 7$. This completes the proof.
\end{proof}

Next, we prove Lemma \ref{lemma:upper_on_palette}. The proof relies on the following two theorems. The first theorem attributed to Erd\H{o}s, whose first proof was published by Bondy and Simonovits \cite{even_cycle_thm_bondy_simonovits}. Denote by $ex(n,H)$ the maximum number of edges that a graph $G$ on $n$ vertices can have under the condition that $G$ does not contain $H$ as a subgraph of $G$.

\begin{theorem}[The Even Cycle Theorem]	\label{thm:even_cycle_forbid_upper_bound}
	$ ex(n,C_{2k})=O(n^{1+(1/k)}).$
\end{theorem}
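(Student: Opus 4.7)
The plan is to prove the Bondy--Simonovits Even Cycle Theorem by the classical BFS-and-path-counting argument. Let $G$ be a $C_{2k}$-free graph on $n$ vertices with $e$ edges; the goal is $e \leq C(k)\, n^{1+1/k}$.

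First I would perform the standard minimum-degree reduction. Iteratively delete any vertex of degree strictly smaller than $e/n$; each such deletion removes fewer than $e/n$ edges, so after at most $n-1$ deletions fewer than $e$ edges are gone in total and a nonempty subgraph $G'$ survives with $\delta(G') \geq e/n =: d$. Since $G'$ inherits the $C_{2k}$-free property, it suffices to show that any $C_{2k}$-free graph on at most $n$ vertices has minimum degree $d \leq c(k)\, n^{1/k}$.

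Next I would carry out a BFS analysis. Fix any vertex $v \in V(G')$ and let $L_i$ be the set of vertices at graph distance exactly $i$ from $v$, so that every edge of $G'$ lies either inside some $L_i$ or between consecutive layers $L_i$ and $L_{i+1}$. The heuristic is that in a graph of minimum degree $d$, the BFS should branch, giving $|L_i|$ growth roughly by a factor of $d$ per step, so $|L_k| \geq (d/2)^k$ and thus $(d/2)^k \leq n$, which is the target bound. To force this growth, I would show that whenever a layer $L_{i+1}$ is substantially smaller than $(d-1)|L_i|$, a large number of edges from $L_i$ either land inside $L_i$ or go back to $L_{i-1}$, producing many short $v$-to-$u$ walks. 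A pigeonhole over endpoints and lengths then yields two internally vertex-disjoint $v$-to-$u$ paths of equal length, whose union is an even cycle in $G'$.

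The main obstacle, and the substantive content of Bondy--Simonovits, is pinning the length of this even cycle to be exactly $2k$ rather than some shorter even number. To handle this I would follow their weighted path-counting scheme: for each depth $i \leq k$ and each vertex $u$, let $p_i(u)$ be the number of $v$-to-$u$ walks of length $i$ in $G'$; using $\delta(G') \geq d$ one shows $\sum_u p_i(u) \geq d^i$, and convexity gives $\sum_u p_i(u)^2 \geq d^{2i}/n$. Combined with a careful argument that internally disjoint paths of any length $\leq k$ from $v$ to some common endpoint are forbidden (else a shorter even cycle would already produce $C_{2k}$ via extension along the BFS tree, after an initial choice of $v$ so that the tree is "generic"), one forces a contradiction unless $d^k \leq c(k)\, n$. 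Extracting $d \leq c'(k)\, n^{1/k}$ from this inequality and substituting back completes the bound $e \leq C(k)\, n^{1+1/k}$. The most delicate step is the extension/length-matching lemma; making it fully rigorous requires the BFS tree to be chosen so that for every vertex $u$ in layer $L_i$ there is a canonical representative path, and tracking how a would-be short even cycle can be lengthened by $2$-step detours along tree edges to produce $C_{2k}$ itself.
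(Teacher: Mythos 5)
First, note that the paper does not prove this statement at all: Theorem \ref{thm:even_cycle_forbid_upper_bound} is the Bondy--Simonovits even cycle theorem, which the authors import as a black box from the literature (citing \cite{even_cycle_thm_bondy_simonovits}) and use only to bound the number of edges of the palette graph in Lemma \ref{lemma:upper_on_palette}. So there is no in-paper proof to compare yours against; your proposal has to stand on its own as a proof of a known, nontrivial theorem.

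As such, it is an accurate outline of the standard strategy but not a proof. The minimum-degree reduction and the reformulation ``a $C_{2k}$-free graph with minimum degree $d$ satisfies $d = O(n^{1/k})$'' are fine. The gap is exactly where you say it is: everything after ``the main obstacle'' is a gesture, not an argument. Two specific problems. (i) From $\sum_u p_i(u)^2 \geq d^{2i}/n$ you get many pairs of equal-length walks sharing both endpoints, but walks are not paths and two such walks need not be internally disjoint; converting this count into two internally vertex-disjoint $v$--$u$ paths of equal length requires a separate argument you do not supply. (ii) More seriously, the claim that a shorter even cycle ``would already produce $C_{2k}$ via extension along the BFS tree'' after a generic choice of root is not how the length-pinning works and is false as stated: a $C_{2k}$-free graph may well contain $C_{2j}$ for $j<k$, and there is no general way to lengthen a cycle by $2$-step detours along tree edges. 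The actual Bondy--Simonovits argument shows that if some layer $V_i$ with $i\le k$ induces too many edges (or sends too many back to $V_{i-1}$), one finds a theta subgraph whose branch vertices are joined to the root by tree paths of controlled lengths, and then invokes the lemma that a theta graph contains paths between any two of its vertices of many different lengths (indeed of both parities outside a controlled bipartite case); this is what manufactures a cycle of length exactly $2k$. That lemma and the accompanying length bookkeeping are the substantive content of the theorem and are absent from your proposal. Since the paper only needs the statement as a cited input, the appropriate course is to cite Bondy--Simonovits rather than reprove it; a self-contained proof would have to establish the theta-graph lemma.
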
	

Define the \textit{generalized theta graph}, denoted by $\Theta_{k_1, \dots, k_m}$, to be the graph obtained by fixing two vertices $v$ and $v'$, which are connected by $m$ internally disjoint paths with lengths $k_1, \dots, k_m$, respectively. The following result is from \cite{generalized_theta_liu_yang}.

\begin{theorem} \label{thm:generalized_theta}
Fix positive integers $k_1, \dots, k_m$ with the same parity, in which 1 appears at most once. Then,
\[ex(n,\Theta_{k_1, \dots, k_m})=O(n^{1+(1/k^*)}),\]
where $k^*=\frac{1}{2}\min_{1\leq i < j \leq m}(k_i+k_j)$.
\end{theorem}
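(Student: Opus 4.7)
The plan is to exploit the fact that the hypothesis on $G$ forbids \emph{every} graph with exactly $l$ edges and minimum degree $\geq 2$ as a subgraph. So it suffices to exhibit, in each parity case, a single explicit graph $H$ with exactly $l$ edges and minimum degree at least $2$, for which a known extremal bound $ex(c, H)$ already matches the claimed upper bound on $|\cE(G)|$. The natural candidates are the even cycle $C_{2k}$ (for even $l$) and a three-path generalized theta graph $\Theta_{k_1, k_2, k_3}$ (for odd $l$); these are handled respectively by Theorem \ref{thm:even_cycle_forbid_upper_bound} and Theorem \ref{thm:generalized_theta}.

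For even $l = 2k$, the cycle $C_{2k}$ has exactly $l$ edges and minimum degree $2$, so by hypothesis $G$ is $C_{2k}$-free, and Theorem \ref{thm:even_cycle_forbid_upper_bound} immediately gives $|\cE(G)| = O(c^{1+1/k})$. The whole work lies in the odd case.

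For odd $l \geq 7$, the strategy is to find positive integers $k_1 \leq k_2 \leq k_3$, all odd, with $1$ appearing at most once, satisfying $k_1 + k_2 + k_3 = l$ and $k_1 + k_2 = 2\lfloor l/3 \rfloor$. Given such a decomposition, $\Theta_{k_1, k_2, k_3}$ has exactly $l$ edges, its two branch vertices have degree $3$, and every internal path-vertex has degree $2$, so its minimum degree is $2$; the hypothesis therefore forces $G$ to be $\Theta_{k_1, k_2, k_3}$-free, and Theorem \ref{thm:generalized_theta} applies with $k^* = \tfrac12(k_1 + k_2) = \lfloor l/3 \rfloor$ to yield $|\cE(G)| = O(c^{1 + 1/k^*})$. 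Producing the decomposition is routine case analysis on $l \bmod 6$: I would take
\[ (k_1, k_2, k_3) = \begin{cases} (2t-1,\, 2t+1,\, 2t+1) & \text{if } l = 6t+1, \\ (2t+1,\, 2t+1,\, 2t+1) & \text{if } l = 6t+3, \\ (2t+1,\, 2t+1,\, 2t+3) & \text{if } l = 6t+5, \end{cases} \]
which reduces to $(1,3,3)$ when $l = 7$ (the only instance where $1$ appears at all) and to triples of values $\geq 3$ in all other cases.

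The main obstacle is not analytical but arithmetic bookkeeping: one must check that the chosen decomposition simultaneously enforces the common-parity condition required by Theorem \ref{thm:generalized_theta}, the ``at most one $1$'' condition, and the optimal minimum pair-sum $k_1 + k_2 = 2 \lfloor l/3 \rfloor$. Once this is in place, both halves of the lemma are immediate consequences of the two quoted extremal theorems.
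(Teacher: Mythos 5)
There is a fundamental mismatch here: what you have written is not a proof of Theorem~\ref{thm:generalized_theta} at all. Your argument takes as hypothesis a graph $G$ that contains no subgraph with exactly $l$ edges and minimum degree at least $2$, and derives the edge bounds $O(c^{1+1/k})$ and $O(c^{1+1/k^*})$ --- that is the statement of Lemma~\ref{lemma:upper_on_palette} (together with the arithmetic content of Corollary~\ref{cor:generalized_theta}), not the statement you were asked to prove. Worse, your argument is circular with respect to the target: in the odd case you explicitly invoke Theorem~\ref{thm:generalized_theta} as a black box to conclude that a $\Theta_{k_1,k_2,k_3}$-free graph has $O(c^{1+1/k^*})$ edges. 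One cannot prove a theorem by citing it.

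Theorem~\ref{thm:generalized_theta} is itself a genuine Tur\'an-type extremal result --- an upper bound on $ex(n,\Theta_{k_1,\dots,k_m})$ for generalized theta graphs --- which the paper does not prove but imports from \cite{generalized_theta_liu_yang}; it is a strengthening of the Even Cycle Theorem (Theorem~\ref{thm:even_cycle_forbid_upper_bound}), since $C_{2k}=\Theta_{k,k}$. A proof would require actual extremal-graph-theoretic work, e.g.\ a Bondy--Simonovits-style breadth-first-search layering argument showing that a graph with substantially more than $n^{1+1/k^*}$ edges must contain two short internally disjoint paths of prescribed lengths between a common pair of vertices, plus a third such path. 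Nothing of this kind appears in your proposal. What you have reconstructed --- correctly, as far as it goes, including the almost-equipartition of $l$ into three odd parts with minimum pair-sum $2\lfloor l/3\rfloor$ --- is the paper's deduction of Lemma~\ref{lemma:upper_on_palette} \emph{from} Theorem~\ref{thm:generalized_theta}, i.e.\ the reduction in the opposite direction.
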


The following is an easy implication from Theorem \ref{thm:generalized_theta}.

\begin{corollary}\label{cor:generalized_theta}
Let $l \geq 7$ be odd. Let $k_1, k_2$ and $k_3$ be odd positive integers which almost equipartition $l$, that is, satisfy $|k_i-k_j|\leq 2$ for every $i,j \in [3]$ and $k_1+k_2+k_3=l$. Then, 
\[ex(n,\Theta_{k_1, k_2, k_3})=O(n^{1+(1/k^*)}),\]
where $k^*=\lfloor l/3 \rfloor$.
\end{corollary}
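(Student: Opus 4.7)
The plan is simply to invoke Theorem \ref{thm:generalized_theta}: the corollary is a direct specialization, and no new extremal argument is needed. What I would verify is that both hypotheses of Theorem \ref{thm:generalized_theta} are met, and that its ``$k^*$'' parameter coincides with $\lfloor l/3 \rfloor$.

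For the hypotheses: the $k_i$ are odd by assumption and therefore share the same parity, so the first condition is immediate. For the second, I would rule out two of the $k_i$ being equal to $1$. If, say, $k_1 = k_2 = 1$, then $k_3 = l - 2 \geq 5$ since $l \geq 7$, which violates the almost-equipartition condition $|k_1 - k_3| \leq 2$. Hence $1$ appears at most once among $k_1, k_2, k_3$, and Theorem \ref{thm:generalized_theta} applies.

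It then remains to compute $\tfrac{1}{2}\min_{i<j}(k_i+k_j)$ and show it equals $\lfloor l/3 \rfloor$. Reordering so that $k_1 \leq k_2 \leq k_3$, this minimum is $(k_1+k_2)/2$. Writing $l = 3q + r$ with $r \in \{0,1,2\}$ and $q = \lfloor l/3 \rfloor$, the parity of $q$ is forced by $l$ being odd ($q$ is odd when $r \in \{0,2\}$ and even when $r = 1$). A short case analysis then shows that the only odd triple summing to $l$ with pairwise differences at most $2$ is $(q,q,q)$ when $r = 0$, $(q-1, q+1, q+1)$ when $r = 1$, and $(q, q, q+2)$ when $r = 2$. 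In every case $k_1 + k_2 = 2q$, so $k^* = q = \lfloor l/3 \rfloor$, and Theorem \ref{thm:generalized_theta} delivers the bound $O(n^{1+1/k^*})$.

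There is no serious obstacle here; the only step demanding a bit of care is the parity bookkeeping inside the case analysis, but once one tracks the implication $l \text{ odd} \Rightarrow q$ has the parity opposite to $r$ in the middle case, the three cases are mechanical.
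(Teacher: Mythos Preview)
Your proposal is correct and matches the paper's approach: the paper states the corollary as ``an easy implication from Theorem \ref{thm:generalized_theta}'' and gives no explicit proof, so your verification of the hypotheses and the computation $\tfrac{1}{2}\min_{i<j}(k_i+k_j)=\lfloor l/3\rfloor$ via the three residue cases is precisely the intended argument.
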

\begin{proof}[Proof of Lemma \ref{lemma:upper_on_palette}]
When $l$ is even, $G$ cannot have a cycle $C_l$ as a subgraph by the condition. When $l$ is odd and at least 7, for positive odd integers $k_1, k_2$ and $k_3$ which almost equipartition $l$, $G$ cannot have $\Theta_{k_1,k_2,k_3}$ as a subgraph by the condition. Therefore, Theorems \ref{thm:even_cycle_forbid_upper_bound} and \ref{thm:generalized_theta} give the desired result.
\end{proof}

\subsection{Upper bounds on \texorpdfstring{$\CF(\FC_d(n))$}{Xcf(FCd(n))} for even \texorpdfstring{$d\geq 4$}{d>=4}}\label{sec:upper-bound}
\begin{figure}[ht]
	\centering
	\includegraphics[totalheight=6cm]{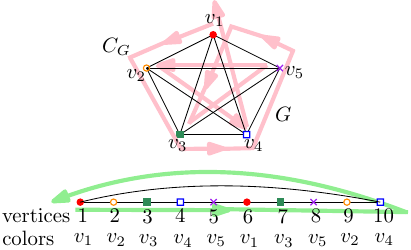}
	\caption{Illustration: A coloring on $[n]$ using $c$ colors from an Eulerian circuit $C_G$ of a graph $G$ with $c$ vertices and $n$ edges. For this example, we have $n=10$ and $c=5$, and the tour begins from $v_1$ and $1$ for $G$ and $C_{[10]}$, respectively. By the proof of Theorem \ref{thm:tight_upper} below, this also gives a CF-coloring of $\FC_4(10)$.
 }
	\label{fig:C4n}
\end{figure}

Before we turn our attention to prove upper bounds on $\CF(\FC_d(n))$ we state the following key lemma:

\begin{lemma} \label{lemma:sufficient_condition_palette}
	Fix an even dimension $d\geq 4$. Let $G$ be a simple graph on $c$ vertices and $n$ edges with the following properties.
	\begin{enumerate}[(1)]
		\item $G$ is Eulerian, that is, every vertex of $G$ has an even degree and $G$ is connected.
		\item $G$ does not contain any cycle $C_j$ where $3 \leq j \leq d/2$ and $j \ne d/2-1$. 
	\end{enumerate}
	Then, there is a CF-coloring of $\FC_{d}(n)$ with $c$ colors.
\end{lemma}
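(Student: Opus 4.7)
The plan is to construct the coloring directly from an Eulerian circuit of $G$ and then verify the CF condition on each facet via a subgraph-extremal argument that combines the palette-graph formalism of Subsection~\ref{sec:lower-bound} with Gale's criterion. Identify $V(G)$ with $[c]$ and fix an Eulerian circuit written as a cyclic sequence of vertex-visits $w_1,w_2,\ldots,w_n,w_{n+1}=w_1$, with the $i$-th traversed edge being $\{w_i,w_{i+1}\}$; define $\varphi\colon[n]\to[c]$ by $\varphi(i)=w_i$. The key feature of this construction is that the palette multigraph $P_{C_{[n]},\varphi}$ of the cycle $C_{[n]}$ with respect to $\varphi$ is $G$ itself: every cyclically consecutive pair $(i,i+1)$ in $[n]$ receives colors forming an edge of $G$, and each edge of $G$ is realized in this way exactly once.

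Suppose for contradiction that some facet $F$ of $\FC_d(n)$ has every color appearing at least twice, so $|S|\le d/2$ for $S:=\varphi(F)$. I would first establish a cyclic strengthening of Gale's criterion: for even $d$, every facet $F$ decomposes in $C_{[n]}$ into cyclic intervals all of even sizes $2k_1,\ldots,2k_{t_c}$ with $\sum k_j=d/2$. Indeed, the interior linear intervals are even by Theorem~\ref{thm:gale}; if both $1$ and $n$ lie in $F$, their two boundary linear intervals merge into one cyclic interval of even total size (by parity, since $d$ is even), and the case in which only one of $1,n$ lies in $F$ is handled by an analogous parity argument. Let $H$ be the subgraph of $G$ on vertex set $S$ whose $d-t_c$ edges are the tour-edges $\{\varphi(i),\varphi(i+1)\}$ for cyclically consecutive $i,i+1\in F$; these are distinct edges of $G$, so $H$ is simple with $|V(H)|\le d/2$ and $|E(H)|=d-t_c$. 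Condition~(2) of the lemma then implies $H$ contains no cycle of length in $\{3,\ldots,d/2\}\setminus\{d/2-1\}$, and since any cycle of $H$ has length at most $|V(H)|\le d/2$, every cycle in $H$ must have length exactly $d/2-1$.

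The final step is to derive the contradiction that no such $H$ can exist. The cyclomatic identity $|E(H)|-|V(H)|+c(H)\ge d/2-t_c+1$ shows that $H$ contains many independent cycles when $t_c$ is small, and a Moore-type extremal bound rules out a graph on at most $d/2$ vertices supporting so many cycles all of the exact length $d/2-1$. The main obstacle I anticipate is the regime $t_c\approx d/2$, where the cyclomatic bound degrades; in this case one must exploit the additional structural input that each cyclic interval of even size $2k_j$ contributes a walk in $H$ of \emph{odd} length $2k_j-1$, so that any cycle produced by combining two walks sharing at least two vertices has length equal to a sum of two sub-walks whose parities are controlled. A case analysis over the walk-length patterns $(2k_j-1)_j$ then forces at least one resulting cycle length to lie in the forbidden range $\{3,\ldots,d/2-2,d/2\}$ rather than on the single permitted value $d/2-1$, yielding the contradiction and completing the proof that $\varphi$ is a CF-coloring of $\FC_d(n)$ with $c$ colors.
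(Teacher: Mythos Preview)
Your construction of $\varphi$ from an Eulerian circuit and your cyclic form of Gale's criterion are both correct and match the paper's setup (the paper records the cyclic decomposition as the displayed identity for $\cE(\FC_d(n))$ just before the lemma). The gap is in the final step: the ``Moore-type extremal bound'' and the ``case analysis over the walk-length patterns $(2k_j-1)_j$'' are asserted, not carried out, and it is not evident how to complete either. In particular you never verify that $H$ has minimum degree at least~$2$, which is what guarantees that a cycle exists at all in the critical regime $t_c=d/2$ where your cyclomatic bound degenerates.

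The difficulty is self-inflicted. You decompose $F$ into its \emph{maximal} cyclic intervals, obtaining a subgraph $H$ with a variable number $d-t_c\ge d/2$ of edges, and then struggle to control its cycle structure. The paper instead notes that since every cyclic interval has even size, one may further split each into consecutive pairs, writing $F$ as a disjoint union of exactly $l=d/2$ edges $e_1,\dots,e_l$ of $C_{[n]}$. Because the Eulerian circuit induces a bijection $\psi\colon\cE(C_{[n]})\to\cE(G)$, the images $\psi(e_1),\dots,\psi(e_l)$ are $l$ \emph{distinct} edges of the simple graph $G$; the resulting subgraph $H$ has exactly $l$ edges, and the no-unique-color hypothesis forces minimum degree at least~$2$ (two occurrences of a color cannot lie in the same $e_i$, as $G$ has no loops). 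Now the contradiction is immediate: $H$ contains a cycle of length at most $l$, which by condition~(2) must have length exactly $l-1$; the single leftover edge is then either a chord of that cycle, producing a shorter forbidden cycle, or is incident to a vertex outside it, producing a vertex of degree~$1$. This one refinement step---pairing off each even interval---replaces your entire last paragraph.
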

We use Lemma \ref{lemma:sufficient_condition_palette} and lower bounds construction of graphs with large girth and many edges in order to provide upper bounds on $\CF(\FC_d(n))$.

\smallskip 

We first prove Lemma \ref{lemma:sufficient_condition_palette}. For $n\geq 3$, recall that we denote by $C_{[n]}$ the cycle on the vertex set $[n]$ where the elements of $[n]$ are cyclically ordered. That is, let
\[C_{[n]}=([n], \{\{i,i+1\}: i\in [n]\textrm{ in modulo $n$}\}).\]

It is easy to see that given a coloring $\varphi:[n]\to [c]$ using $c$ colors, the palette graph $P_{C_{[n]}, \varphi}$ is Eulerian. For the other direction, given an Eulerian graph $G$ on $c$ vertices and $n$ edges, we can also find a coloring $\varphi:[n] \to [c]$ such that $G$ is the same as the palette graph $P_{C_{[n]}, \varphi}$: choose an Eulerian circuit $C_G$ of $G$ and we simultaneously traverse $C_{[n]}$ and $C_G$. Whenever we visit a new vertex $v$ of $C_{[n]}$, we color $v$ with the current vertex of the graph $G$. We do it until we traverse all the edges and return to the original vertex. See Figure \ref{fig:C4n} for an illustration.

Note that for even dimensions $d$ we have
\[\cE(\FC_d(n))=\{e_1\cup \cdots \cup e_{d/2} \subset [n]: e_i \in \cE(C_{[n]}),\, e_i\cap e_j=\emptyset \textrm{ for distinct $i,j \in [n]$}   \}\]
by Theorem \ref{thm:gale}. Lemma \ref{lemma:sufficient_condition_palette} can be also understood as a theorem about palette graphs of $C_{[n]}$ before colorings become explicit, but the conditions in the theorem also provide a concrete CF-coloring of $\FC_d(n)$ for even $d\geq 4$.

\begin{proof}[Proof of Lemma \ref{lemma:sufficient_condition_palette}]
	By (1), $G$ is Eulerian. Take an Eulerian tour $e_1e_2\dots e_n$ of $G$ and construct a coloring $\varphi: [n]\rightarrow [c]$ as described above. From $\varphi$, we can find an induced bijection $\psi:\cE(C_{[n]})\to \cE(G)$. We claim that $\varphi$ is a CF-coloring of $\FC_d(n)$, that is, we show that an arbitrary hyperedge $e$ of $\FC_d(n)$ has a vertex with a unique color. Assume to the contrary that this is not the case and let $l=d/2$. By Theorem \ref{thm:gale}, there are pairwise disjoint edges $e_1, \dots, e_l$ of $C_{[n]}$ such that $e=e_1 \cup \cdots \cup e_l$. Since $G$ is simple, by our construction of $\varphi$, each 
	$e_i$ attains exactly 2 colors from $\varphi$. So, every color used in $\varphi(e)$ should appear in at least two edges among $e_1, \dots, e_l$. Let $H$ be the subgraph of $G$ induced by the edges $\psi(e_1), \dots, \psi(e_l)$. It follows that every vertex of $H$ has degree at least 2, so there is a cycle $C_H$ in $H$ of length at most $l$. Since the other lengths are forbidden, $C_H$ has length $l-1$ (this implies that $l\geq 4$). Then there is exactly one edge of $H$, say $f=\psi(e_1)$, which is not used in $C_H$. If $f$ is a chord of $C_H$, then we can find a smaller cycle which leads to a contradiction with (2). If $f$ uses a vertex not in $C_H$, then $H$ has a vertex of degree 1 which again leads to a contradiction.
\end{proof}

\smallskip

\textbf{Explicit Upper Bounds on $\CF(\FC_d(n))$.}  Now we prove Theorems \ref{thm:tight_upper} and \ref{thm:general_upper} by using Lemma \ref{lemma:sufficient_condition_palette} and known constructions on the girth conjecture, see \cite{furedi_simonovits_degenerate_extremal_survey} for backgrounds. 
For each case, we first find a graph $G$ on $c$ vertices and $n$ edges which satisfies Conditions (1) and (2) from Lemma \ref{lemma:sufficient_condition_palette} for a given dimension $d$ where $c$ and $n$ can be arbitrarily large. Then we express $c$ in terms of $n$, and use Lemma \ref{lemma:sufficient_condition_palette} to conclude $\CF(\FC_d(n))\leq c(n)$. 

For many values of $n$, the presented constructions might not have exactly $n$ edges where we cannot directly apply Lemma \ref{lemma:sufficient_condition_palette}.
However, in each construction of dimension $d$, one can easily find a suitable constant $\alpha > 1$ such that for a sufficiently large $n$ there is a desirable graph using $n'$ edges with $n\leq n' \leq \alpha n$, which in turn gives a CF-coloring $\varphi$ of $\FC_d(n')$ by Lemma \ref{lemma:sufficient_condition_palette}. Note that the restriction of $\varphi$ to $\FC_d(n)$ is a CF-coloring which gives the same asymptotic bound on the number of colors up to a constant factor.

\begin{proof}[Proof of Theorem \ref{thm:tight_upper}] Note that we already have the desired lower bound for each case.
We separately consider the upper bound for each dimension.

\begin{claim*}
$\CF(\FC_4(n))=O(\sqrt{n}).$
\end{claim*}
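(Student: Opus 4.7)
The plan is to apply Lemma~\ref{lemma:sufficient_condition_palette} with $d=4$, after noting that the lemma's hypotheses collapse in this case. Specifically, condition (2) forbids cycles $C_j$ with $3 \le j \le d/2 = 2$ and $j \ne d/2-1$, an empty range of indices, so condition (2) is vacuously satisfied by every simple graph. Hence, any Eulerian simple graph on $c$ vertices with $n$ edges immediately produces a CF-coloring of $\FC_4(n)$ with $c$ colors, and the entire task reduces to exhibiting such a graph with $c = O(\sqrt n)$.

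For this, I would use the complete graph $K_c$ with $c$ odd: it is connected and $(c-1)$-regular with $c-1$ even, hence Eulerian, and it has $\binom{c}{2} = \Theta(c^2)$ edges. Given the target $n$, pick $c$ to be the smallest odd integer with $\binom{c}{2} \ge n$; then $c = O(\sqrt n)$ and $n \le \binom{c}{2} \le n + O(\sqrt n)$, so taking $n' = \binom{c}{2}$ satisfies $n \le n' \le \alpha n$ for any fixed $\alpha > 1$ and sufficiently large $n$. Applying Lemma~\ref{lemma:sufficient_condition_palette} to $K_c$ yields a CF-coloring of $\FC_4(n')$ with $c$ colors, and the restriction remark stated just before the proof transfers this to the desired bound $\CF(\FC_4(n)) = O(\sqrt n)$.

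I do not anticipate a substantive obstacle for $d = 4$: the matching lower bound $\Omega(\sqrt n)$ is already in hand from Theorem~\ref{thm:lower_bound_CF}(1), and the upper bound amounts to noting that Lemma~\ref{lemma:sufficient_condition_palette}'s girth-type condition vanishes when $d/2 = 2$, so the construction side reduces to producing one dense Eulerian simple graph, which $K_c$ supplies trivially. The genuinely hard work will appear only for $d \ge 6$, where condition (2) of Lemma~\ref{lemma:sufficient_condition_palette} forbids actual cycles and one must invoke explicit extremal constructions (e.g.\ incidence graphs of generalized polygons) to obtain Eulerian graphs of large girth with many edges.
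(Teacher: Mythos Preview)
Your proposal is correct and follows essentially the same approach as the paper: observe that for $d=4$ condition~(2) of Lemma~\ref{lemma:sufficient_condition_palette} is vacuous, take $G=K_c$ with $c$ odd so that $G$ is Eulerian, and conclude $\CF(\FC_4(n))\le c=O(\sqrt n)$. You even spell out the restriction step for general $n$ more explicitly than the paper does (the paper handles this once, in the paragraph preceding the individual claims).
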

\begin{proof}
	(See Figure \ref{fig:C4n} for an illustration.) For this case, we do not need to forbid any cycles. So, we can take $G=K_{c}$ where $c$ is odd to satisfy (1). Since $G$ has	$n=\binom{c}{2}$ edges, we have $\CF(\FC_4(n))\leq c = O(\sqrt{n})$.
\end{proof}

\begin{claim*}
	$ \CF(\FC_6(n)) = O(\sqrt{n}).$
\end{claim*}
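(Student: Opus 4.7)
The plan is to apply Lemma~\ref{lemma:sufficient_condition_palette} with $d=6$. In that regime, the forbidden-cycle clause (2) ranges over integers $j$ with $3 \le j \le d/2 = 3$ and $j \ne d/2-1 = 2$, so the only cycle we must avoid is the triangle $C_3$. Condition (1) additionally demands that $G$ be connected and have all even degrees. Thus it suffices to exhibit a connected, triangle-free, Eulerian simple graph on $c$ vertices with $n$ edges satisfying $n = \Omega(c^2)$, since Lemma~\ref{lemma:sufficient_condition_palette} would then deliver a CF-coloring of $\FC_6(n)$ with $c = O(\sqrt{n})$ colors.

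For the construction I would simply take $G = K_{a,a}$ with $a$ an even positive integer. Being bipartite, $G$ is triangle-free; it is clearly connected; and since it is $a$-regular with $a$ even, it is Eulerian. It has $c = 2a$ vertices and $n = a^2$ edges, so $c = 2\sqrt{n}$. Invoking Lemma~\ref{lemma:sufficient_condition_palette} then yields a CF-coloring of $\FC_6(a^2)$ with $2a$ colors whenever $a$ is even.

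To handle an arbitrary $n$, I would pick the smallest even integer $a$ with $a^2 \ge n$, so that $n' := a^2$ satisfies $n \le n' \le n + O(\sqrt{n}) \le 2n$ for sufficiently large $n$. By the remark made immediately before the proof of Theorem~\ref{thm:tight_upper}, restricting the resulting CF-coloring of $\FC_6(n')$ to $[n] \subset [n']$ gives a CF-coloring of $\FC_6(n)$ using the same $2a = O(\sqrt{n})$ colors.

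I do not anticipate a genuine obstacle: the task reduces cleanly to producing a triangle-free Eulerian graph with quadratically many edges, and $K_{a,a}$ with $a$ even is the textbook such example. The matching lower bound was already supplied by Theorem~\ref{thm:lower_bound_CF}~(1), so this upper bound completes the asymptotic equality.
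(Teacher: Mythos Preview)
Your argument is correct and essentially identical to the paper's: you apply Lemma~\ref{lemma:sufficient_condition_palette} with $d=6$, note that only $C_3$ must be forbidden, and take $G=K_{a,a}$ with $a$ even (the paper writes this as $K_{2m,2m}$ with $c=4m$). Your handling of non-square $n$ via restriction is exactly the mechanism the paper invokes in the paragraph preceding the proof of Theorem~\ref{thm:tight_upper}.
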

\begin{proof}
	For this case, we need to forbid triangles $C_3$. For $c=4m$, we take $G=K_{2m,2m}$. $G$ satisfies (1) and (2) for $d=6$. Since $G$ has $n=c^2/4$ edges, we have $\CF(\FC_6(n))\leq c = \sqrt{n}$.
\end{proof}

\begin{claim*}
	$\CF(\FC_8(n)) = O(n^{\frac{2}{3}})$
\end{claim*}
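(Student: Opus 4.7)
The plan is to invoke Lemma \ref{lemma:sufficient_condition_palette} with $d=8$. For this dimension, condition (2) only forbids cycles $C_j$ with $j\in\{3,4\}$ and $j\ne d/2-1=3$, so in total the graph $G$ is required to be only $C_4$-free (triangles are allowed). The task therefore reduces to constructing, for every sufficiently large $n$, a connected Eulerian $C_4$-free simple graph on $c = O(n^{2/3})$ vertices with $n$ edges.

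I would use the bipartite point-line incidence graph $G$ of the projective plane $PG(2,q)$ for an odd prime power $q$. This graph is $(q+1)$-regular on $c=2(q^2+q+1)$ vertices, with $n=(q+1)(q^2+q+1)=\Theta(q^3)$ edges, and it is $C_4$-free because any two points (resp.\ two lines) lie on a unique line (resp.\ meet in a unique point), so there cannot be two internally disjoint length-$2$ paths between two vertices of the same color class. Connectivity is immediate from the incidence axioms. With $q$ odd the common degree $q+1$ is even, so $G$ is Eulerian. From $c=\Theta(q^2)$ and $n=\Theta(q^3)$ one obtains $c=\Theta(n^{2/3})$, and Lemma \ref{lemma:sufficient_condition_palette} then yields a CF-coloring of $\FC_8(n')$ with $O((n')^{2/3})$ colors whenever $n'=(q+1)(q^2+q+1)$ for some odd prime $q$.

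To pass from this discrete family of values to arbitrary large $n$, I would apply the density of odd primes (e.g.\ Bertrand's postulate, discarding the single prime $2$) to choose, for each sufficiently large $n$, an odd prime $q$ with $n\le (q+1)(q^2+q+1)\le \alpha n$ for some absolute constant $\alpha$. The restriction argument stated in the paragraph just before the proof of Theorem \ref{thm:tight_upper} then converts a CF-coloring of $\FC_8(n')$ into one of $\FC_8(n)$ with the same asymptotic color count, producing the desired bound $\CF(\FC_8(n))=O(n^{2/3})$.

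The only real subtlety is reconciling the Eulerian parity requirement with $C_4$-freeness; both are handled uniformly by restricting to odd prime powers $q$, since projective planes of every prime-power order exist and odd primes are dense enough to approximate any target edge count. No extra ad-hoc modifications to the graph (adding or deleting edges to fix parities or to hit $n$ exactly) are required, which keeps the argument clean and matches the template used in the neighboring $d=4$ and $d=6$ cases.
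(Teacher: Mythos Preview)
Your proposal is correct and follows essentially the same route as the paper: apply Lemma~\ref{lemma:sufficient_condition_palette} with $d=8$, note that only $C_4$ must be excluded, and take $G$ to be the point--line incidence graph of $PG(2,q)$ for an odd prime power $q$, which is connected, $(q+1)$-regular (hence Eulerian), and $C_4$-free, with $c=2(q^2+q+1)$ and $n=(q+1)(q^2+q+1)$. Your handling of the gap-filling between admissible values of $n$ via Bertrand's postulate and the restriction argument is exactly the mechanism the paper invokes in the paragraph preceding the proof of Theorem~\ref{thm:tight_upper}.
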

\begin{proof}
For this case, we need to forbid $C_4$. We use finite projective planes. A \textit{finite projective plane of order $q$} consists of a set $X$ of $q^2+q+1$ elements called \textit{points}, and a family $\mathcal{L}$ of $q^2+q+1$ subsets of $X$ called \textit{lines}, which satisfies the following properties:
\begin{enumerate}[(i)]
	\item Each line has $q+1$ points.
	\item Any point belongs to exactly $q+1$ lines.
	\item Every two points lie on a unique line.
	\item Any two lines meet in a unique point.
\end{enumerate}
When $q$ is a prime power, there is a well-known construction of a finite projective plane of order $q$, $PG(2,q)$, from a finite field $\mathbb{F}_q$. For more details on finite projective planes, refer to \cite[Section 12.4]{jukna_book}.

Next, for an odd prime power $q$, let $X$ and $\mathcal{L}$ be the set of points and lines of $PG(2,q)$, respectively. We construct a bipartite graph $G$ on the vertex set $X\cup \mathcal{L}$ such that $p\in X$ and $l\in \mathcal{L}$ are adjacent in $G$ if and only if $p$ is on $l$. By (i) and (ii), $G$ is $(q+1)$-regular, so all degrees in $G$ are even. It is also easy to see $G$ is connected by (iii) and (iv). Hence $G$ is Eulerian. Also, $G$ does not contain $C_4$ as a subgraph by (iii) (or (iv)). 

Denote the number of vertices and edges of $G$ by $c$ and $n$, respectively. Then, $c=2q^2+2q+2$ and $n=(q+1)(q^2+q+1)$. This implies that $\CF(\FC_8(n))\leq c =O(n^{\frac{2}{3}})$.
\end{proof}

\begin{claim*}
	$\CF(\FC_{10}(n)) = O(\sqrt{n}).$
\end{claim*}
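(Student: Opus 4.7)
The plan is to apply Lemma~\ref{lemma:sufficient_condition_palette} with $d=10$, for which $d/2=5$ and $d/2-1=4$, so the forbidden cycle lengths are exactly $j\in\{3,5\}$ (the length $4$ is allowed). Since both forbidden lengths are odd, any bipartite graph automatically avoids them, so we are in essentially the same situation as the $d=6$ case.

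Concretely, I would take $G = K_{2m,2m}$ with $c = 4m$ vertices. This graph is $2m$-regular (so every degree is even) and connected, hence Eulerian, giving condition~(1) of Lemma~\ref{lemma:sufficient_condition_palette}. Being bipartite, $G$ contains no odd cycles whatsoever, in particular no $C_3$ and no $C_5$, which verifies condition~(2) for $d=10$. The number of edges is $n = 4m^2 = c^2/4$, so $c = 2\sqrt{n}$, and Lemma~\ref{lemma:sufficient_condition_palette} yields a CF-coloring of $\FC_{10}(n)$ using $c = O(\sqrt{n})$ colors. For values of $n$ not exactly of the form $4m^2$, the standard rounding remark preceding the proof of Theorem~\ref{thm:tight_upper} (embed into the next available such $n'$ with $n \le n' \le \alpha n$) handles the discrepancy without affecting the asymptotics.

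There is essentially no main obstacle here: the only thing to check is that the forbidden list $\{C_3, C_5\}$ consists solely of odd cycles so that a complete bipartite graph suffices; this is the lucky coincidence that makes $d=10$ behave like $d=6$ rather than like the intermediate case $d=8$ (where $C_4$ must be avoided and finite projective planes are required). Combined with the lower bound $\Omega(\sqrt{n})$ from Theorem~\ref{thm:lower_bound_CF}(1), this establishes $\CF(\FC_{10}(n)) = \Theta(\sqrt{n})$.
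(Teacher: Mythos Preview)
Your proposal is correct and essentially identical to the paper's proof: both apply Lemma~\ref{lemma:sufficient_condition_palette} with $d=10$, observe that only $C_3$ and $C_5$ need to be forbidden, and take $G=K_{c/2,c/2}$ (equivalently $K_{2m,2m}$), reducing to the $d=6$ argument.
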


\begin{proof}
	For this case, we need to forbid $C_3$ and $C_5$. So it is enough to take $G=K_{\frac{c}{2}, \frac{c}{2}}$, and we can use the same argument for $\FC_6(n)$.
\end{proof}

\begin{claim*}
	$\CF(\FC_{12}(n)) = O(n^{\frac{3}{4}}).$
\end{claim*}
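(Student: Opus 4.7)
The plan is to apply Lemma \ref{lemma:sufficient_condition_palette} with $d=12$. Here $d/2=6$ and $d/2-1=5$, so I need an Eulerian graph $G$ on $c$ vertices and $n$ edges that contains no cycle of length $3$, $4$, or $6$ (cycles of length $5$ are permitted). To obtain the claimed bound it suffices to exhibit such a $G$ with $n = \Omega(c^{4/3})$, since then $c = O(n^{3/4})$.

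Such a graph is provided by the incidence graph of a generalized quadrangle of order $(q,q)$, e.g.\ the symplectic quadrangle $W(q)$, for $q$ an odd prime power. This graph $G_q$ is a bipartite $(q+1)$-regular graph on $c=2(q+1)(q^2+1)=\Theta(q^3)$ vertices with $n=(q+1)^2(q^2+1)=\Theta(q^4)$ edges, it is connected, and its girth equals $8$. Since $q$ is odd, every degree $q+1$ is even, so $G_q$ is Eulerian. A girth of $8$ certainly rules out any cycle of length at most $6$, which is stronger than needed. Thus $G_q$ satisfies Conditions (1) and (2) of Lemma \ref{lemma:sufficient_condition_palette} for $d=12$, and we obtain $\CF(\FC_{12}(n)) \leq c = O(n^{3/4})$ for the values of $n$ of the form $n=(q+1)^2(q^2+1)$.

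For arbitrary $n$, I would invoke the density of odd prime powers (so that between $n$ and a constant multiple $\alpha n$ there is always an odd prime power $q$ with $(q+1)^2(q^2+1)$ in that range) together with the restriction argument spelled out in the paragraph preceding the proof of Theorem \ref{thm:tight_upper}: the restriction of a CF-coloring of $\FC_{12}(n')$ to $[n]\subset [n']$ is itself a CF-coloring of $\FC_{12}(n)$, and yields the same bound up to a constant factor. I do not expect any real obstacle here; the only substantive ingredient is the existence of a dense graph of girth $8$, which is classical.
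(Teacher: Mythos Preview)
Your proposal is correct and essentially identical to the paper's proof. The paper cites Benson's minimal bipartite $(q+1)$-regular graph $B_q$ of girth $8$, which is precisely the incidence graph of a generalized quadrangle of order $(q,q)$; indeed your vertex and edge counts $2(q+1)(q^2+1)=2(q^3+q^2+q+1)$ and $(q+1)^2(q^2+1)=(q+1)(q^3+q^2+q+1)$ agree with the paper's, and the remaining steps (odd $q$ for Eulerianness, girth $8$ to forbid $C_3,C_4,C_6$, and the restriction argument for general $n$) match exactly.
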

\begin{proof}
For this case, we need to forbid $C_6$, $C_4$ and $C_3$. In \cite{benson_c6_c10}, for every prime power $q$, Benson constructed a minimal bipartite $(q+1)$-regular graph $B_q$	of girth 8. So $B_q$ does not contain any of the cycles we forbid. Also, for an odd prime power $q$, all degrees in $B_q$ are even. Furthermore, $B_q$ is connected, otherwise it is not a minimal $(q+1)$-regular graph $B_q$	of girth 8. Hence, $B_q$ satisfies all conditions (1) and (2).

From the construction, $B_q$ has $c=2(q^3+q^2+q+1)$ vertices. So $B_q$ has $n=(q+1)(q^3+q^2+q+1)$ edges. This implies that $\CF(\FC_{12}(n))\leq c =O(n^{\frac{3}{4}})$.
\end{proof}
\begin{claim*}
	$\CF(\FC_{14}(n)) = O(n^{\frac{2}{3}}).$
\end{claim*}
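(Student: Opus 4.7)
The plan is to invoke Lemma \ref{lemma:sufficient_condition_palette} for $d=14$. Here $d/2 = 7$ and $d/2 - 1 = 6$, so condition (2) asks for a graph that avoids $C_3, C_4, C_5$ and $C_7$ as subgraphs, while $C_6$ is explicitly permitted. The forbidden odd cycles $C_3, C_5, C_7$ can be wiped out in one shot by insisting that $G$ be bipartite. After that, only $C_4$ among the short cycles remains to be avoided. So it suffices to exhibit a connected, even-regular bipartite graph of girth at least $6$ whose ratio of edges to vertices grows as fast as possible.

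The incidence graph of $PG(2,q)$ already used to prove the bound $\CF(\FC_8(n))=O(n^{2/3})$ fits these requirements verbatim. It is bipartite, so it contains no odd cycle and in particular none of $C_3$, $C_5$, $C_7$. By property (iii) of a projective plane it has girth exactly $6$, so it contains no $C_4$; the $C_6$'s it does contain are allowed by Lemma \ref{lemma:sufficient_condition_palette} since $6 = d/2-1$. For $q$ an odd prime power the graph is $(q+1)$-regular with $q+1$ even, and it is connected by (iii) and (iv), hence Eulerian.

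As in the $d=8$ argument, this graph has $c = 2(q^2+q+1)$ vertices and $n=(q+1)(q^2+q+1)=\Theta(q^3)$ edges, so $c=O(n^{2/3})$. Applying Lemma \ref{lemma:sufficient_condition_palette} yields $\CF(\FC_{14}(n))\le c = O(n^{2/3})$, which matches the lower bound $\Omega(n^{2/3})$ supplied by Theorem \ref{thm:lower_bound_CF}(3) with $k^*=\lfloor 14/6\rfloor=2$.

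There is no real obstacle here: the key observation is that $d=14$ is the first case in which the permitted length $d/2-1 = 6$ coincides with the minimum girth attainable by a bipartite graph, so bipartiteness alone already disposes of every odd forbidden length $3,5,7$, and the incidence graph of a projective plane handles the only remaining even forbidden length $4$. No new extremal construction is needed.
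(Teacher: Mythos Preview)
Your argument is correct and matches the paper's own proof essentially verbatim: the paper also notes that one must forbid $C_3, C_4, C_5, C_7$ and simply reuses the incidence graph of $PG(2,q)$ from the $\FC_8(n)$ case. Your additional remarks about bipartiteness handling all odd forbidden lengths and about the matching lower bound from Theorem~\ref{thm:lower_bound_CF}(3) are accurate elaborations of what the paper leaves implicit.
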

\begin{proof}
For this case, we need to forbid $C_7, C_5, C_4$, and $C_3$. We can use the same construction for $\FC_8(n)$. 
\end{proof}

\begin{claim*}
	$\CF(\FC_{18}(n)) = O(n^{\frac{3}{4}}).$
\end{claim*}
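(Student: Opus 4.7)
The plan is to reuse the same Benson graph construction $B_q$ from the $\FC_{12}(n)$ case and observe that it satisfies the extra constraints imposed by $d=18$ essentially for free. Indeed, for $d=18$, condition (2) of Lemma \ref{lemma:sufficient_condition_palette} demands that the graph $G$ avoid cycles $C_j$ for $j \in \{3,4,5,6,7,9\}$ (the value $j=8=d/2-1$ is allowed). So we need a graph of girth $8$ that in addition contains no $9$-cycle.

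First, I would recall that for every odd prime power $q$, the Benson graph $B_q$ is a connected $(q+1)$-regular bipartite graph of girth $8$ on $c = 2(q^3+q^2+q+1)$ vertices, and consequently has $n = (q+1)(q^3+q^2+q+1)$ edges. Since $q$ is odd, $q+1$ is even, so every vertex has even degree; together with connectedness this verifies condition (1) of Lemma \ref{lemma:sufficient_condition_palette}. For condition (2), the girth-$8$ property rules out cycles of length $3,4,5,6,7$ all at once, and because $B_q$ is bipartite it contains no cycle of odd length, and in particular no $C_9$. This covers the forbidden set $\{C_3,C_4,C_5,C_6,C_7,C_9\}$ exactly.

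Next I would invoke Lemma \ref{lemma:sufficient_condition_palette} to obtain a CF-coloring of $\FC_{18}(n)$ using at most $c$ colors, and translate the parameters into $n$: since $c = \Theta(q^3)$ and $n = \Theta(q^4)$, we have $c = O(n^{3/4})$. As in the previous claims, for arbitrary $n$ we choose a prime power $q$ so that the corresponding edge count $n'$ lies in the range $n \leq n' \leq \alpha n$ for a suitable constant $\alpha>1$ (guaranteed by Bertrand-type density of prime powers), and restrict the resulting CF-coloring of $\FC_{18}(n')$ to $\FC_{18}(n)$ without changing the asymptotics.

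There is no real obstacle here; the only thing to notice is that the upper bound $O(n^{3/4})$ matches the $d=12$ bound precisely because Benson's graph has two independent strengths---girth $8$ and bipartiteness---that together forbid all cycle lengths in $[3,d/2]\setminus\{d/2-1\}$ for both $d=12$ and $d=18$. The combined with the lower bound $\Omega(n^{3/4})$ from Theorem \ref{thm:lower_bound_CF}(3) (applied with $d=18$, $k^*=\lfloor 18/6\rfloor =3$) gives the asymptotic bound $\Theta(n^{3/4})$ claimed in Theorem \ref{thm:tight_upper}.
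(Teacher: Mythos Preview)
Your proposal is correct and takes essentially the same approach as the paper: reuse Benson's bipartite $(q+1)$-regular graph $B_q$ of girth $8$ from the $\FC_{12}(n)$ case, noting that girth $8$ forbids $C_3,\dots,C_7$ while bipartiteness additionally kills $C_9$. The paper's own proof is simply the one-line remark that the $\FC_{12}(n)$ construction works here; you have spelled out the details (in particular the role of bipartiteness for excluding $C_9$) that the paper leaves implicit.
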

\begin{proof}
 For this case, we need to forbid $C_9, C_7, C_6, \dots, C_3$.
 We can use the same construction for $\FC_{12}(n)$. 
\end{proof}

\begin{claim*}
	$\CF(\FC_{20}(n)) = O(n^{\frac{5}{6}}).$
\end{claim*}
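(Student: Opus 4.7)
My plan is to apply Lemma \ref{lemma:sufficient_condition_palette} with $d = 20$. This reduces the task to exhibiting, for arbitrarily large parameters, a connected graph $G$ on $c$ vertices and $n$ edges in which every vertex has even degree and which contains no cycle $C_j$ for $j \in \{3,4,5,6,7,8,10\}$; note that $j = 9 = d/2 - 1$ is exempt from the forbidden range. The target is $c = O(n^{5/6})$.

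First, I would pass to the slightly stronger but more convenient requirement that $G$ have girth at least $12$, which automatically kills every cycle of length in $[3,11]$ and therefore all of the cycles above. The standard dense graphs of girth $12$ come from Benson's construction \cite{benson_c6_c10}: for every prime power $q$, the incidence graph $G_q$ of a generalized hexagon of order $q$ is a bipartite $(q+1)$-regular graph of girth $12$, with $c = 2(q^5 + q^4 + q^3 + q^2 + q + 1)$ vertices and hence $n = (q+1)(q^5 + q^4 + q^3 + q^2 + q + 1)$ edges. This is the girth-$12$ analogue of the projective-plane graph used in the proof for $\FC_{12}(n)$, and it saturates the corresponding density bound up to constants.

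Next, I would specialize to odd prime powers $q$, so that $q+1$ is even and every vertex of $G_q$ has even degree; connectedness follows from the same minimality argument used in the $\FC_{12}$ case, so $G_q$ is Eulerian. All hypotheses of Lemma \ref{lemma:sufficient_condition_palette} for $d = 20$ are then satisfied, producing a CF-coloring of $\FC_{20}(n)$ with $c$ colors. A routine count gives $c = \Theta(q^5)$ and $n = \Theta(q^6)$, whence $c = \Theta(n^{5/6})$; the interpolation remark stated at the start of this subsection handles values of $n$ that are not of the form $(q+1)(q^5 + \cdots + 1)$. The only substantive input is the existence of Benson's girth-$12$ graphs for every odd prime power; once this is granted, verifying the hypotheses of Lemma \ref{lemma:sufficient_condition_palette} and computing the exponent are mechanical. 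The matching lower bound $\CF(\FC_{20}(n)) = \Omega(n^{5/6})$ has already been established in Theorem \ref{thm:lower_bound_CF}(2), since $20 = 4 \cdot 5$.
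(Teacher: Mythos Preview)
Your proposal is correct and follows essentially the same approach as the paper: both apply Lemma~\ref{lemma:sufficient_condition_palette} with $d=20$ using Benson's minimal bipartite $(q+1)$-regular girth-$12$ graphs for odd prime powers $q$, yielding $c=2(q^5+\cdots+1)$ vertices and $n=(q+1)(q^5+\cdots+1)$ edges, hence $c=O(n^{5/6})$. The forbidden-cycle list, the even-degree and connectedness checks, and the interpolation and lower-bound remarks are all handled just as in the paper.
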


\begin{proof}
For this case, we need to forbid $C_{10}, C_8, C_7, \dots, C_3$. In \cite{benson_c6_c10}, Benson also constructed a minimal bipartite $(q+1)$-regular graph $B_q^*$ of girth 12. Similarly as in above, by taking odd $q$, $B_q^*$ satisfies (1) and (2). $B_q^*$ has $c=2(q^5+q^4+q^3+q^2+q+1)$ vertices, so has $(q+1)(q^5+q^4+q^3+q^2+q+1)$ edges. This implies $\CF(\FC_{12}(n))\leq c =O(n^{\frac{5}{6}})$.\end{proof}
This completes the proof.
\end{proof}
\begin{proof}[Proof of Theorem \ref{thm:general_upper}]
In \cite{general_high_girth_CD_Lazebnik_Ustimenko_Woldar}, Lazebnik, Ustimenko and Woldar gave constructions $CD(a,q)$ for every integer $a \geq 1$ and prime power $q$ which yields the best known lower bound for the girth conjecture. We are particularly interested in the case when $a=2k-3$ and $q=2^t$ with $k\geq 4$ and 
$t>2$. In this case, combining the results from  \cite{general_high_girth_CD_Lazebnik_Ustimenko_Woldar} and \cite{number_component_turan_cycle}, $CD(a,q)$ satisfies the following properties.
\begin{enumerate}[(i)]
\item $CD(a,q)$ is connected and $q$-regular.
\item $CD(a,q)$ has girth at least $2k+2$.
\item $|V(CD(a,q))|=2q^{\frac{3k-3+\epsilon(k)}{2}}$.

\item  
 $|\cE(CD(a,q))|=q^{1+\frac{3k-3+\epsilon(k)}{2}}=\left(\frac{|V(CD(k,q))|}{2}\right)^{1+\frac{2}{3k-3+\epsilon(k)}}$ 
\end{enumerate}
Here $\epsilon(k)=0$ when $k$ is odd and $\epsilon(k)=1$ when $k$ is even.

Now, for even $d\geq 16$, let $k=\lfloor \frac{d}{4}\rfloor$. Then $k\geq 4$ and we have either $d=4k+2$ or $d=4k$. By (i) and (ii), we can easily see that for every positive integer $t$, $CD(a,q)$ is Eulerian and does not contain any cycles of length at most $d/2$. Denote the number of vertices and edges of $CD(a,q)$ by $c$ and $n$ respectively. By (iv), we have $\CF(\FC_d(n)) \leq c = O(n^{1-\frac{2}{3k-1+\epsilon(k)}})$. 
\end{proof}

\subsection{Unions of two disjoint intervals of size \texorpdfstring{$r$}{r} in a cycle} \label{subsection:universal_cycle}

We briefly discuss a natural generalization of $\FC_d(n)$. Let $\mathsf{D}_r^m(n)$ be the $(mr)$-uniform hypergraph on the vertex set $[n]$ with the hyperedge-set
\[\cE(\mathsf{D}_r^m(n))=\left\{\bigcup_{i=1}^mV(P_i): \textrm{$P_1,\dots, P_m$ are pairwise vertex-disjoint paths on $r$ vertices in $C_{[n]}$}  \right\}.\]
Note that $\FC_{2m}(n)=\mathsf{D}^m_2(n)$. It was also recently shown that for the sub-hypergraph $H'$ of $\mathsf{D}^m_2(n)$ whose hyperedges are obtained as the union of subpaths $P_1, \dots, P_m$ of the big path $P_{[n]}$ on $[n]$, $H'$ is the facet hypergraph of a piecewise-linear $(2m-1)$-dimensional ball, see \cite[Lemma 4.5]{novik2024transversalnumberssimplicialpolytopes}. In this sense, $\mathsf{D}^m_2(n)$ can be regarded as both combinatorial and topological generalization of $\FC_{2m}(n)$. 

\smallskip 

For $m=2$, we obtain the asymptotically sharp bound on $\CF(\mathsf{D}_r^2(n))$ for every $r\geq 2$. 

\begin{theorem} \label{thm:m_r_intervals}
$\CF(\mathsf{D}_r^2(n))=\Theta(n^{1/r})$.
\end{theorem}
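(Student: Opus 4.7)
The plan is to prove matching bounds $\CF(\mathsf{D}_r^2(n)) = \Omega(n^{1/r})$ and $\CF(\mathsf{D}_r^2(n)) = O(n^{1/r})$.

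For the lower bound, I would fix an arbitrary CF-coloring $\varphi\colon [n]\to[k]$ and, for each path $A$ on $r$ consecutive vertices in $C_{[n]}$, define its color set $C(A) := \varphi(V(A))\subseteq [k]$. The key observation is that if two such paths $A,B$ are vertex-disjoint and satisfy $C(A)=C(B)$, then the hyperedge $V(A)\cup V(B)$ cannot contain any uniquely colored vertex: every $v\in V(A)\cup V(B)$ has $\varphi(v)\in C(A)=C(B)$, so there is a vertex in the opposite path carrying the same color, giving multiplicity at least $2$ in $V(A)\cup V(B)$, contradicting the CF-property. Hence any family of length-$r$ paths sharing a common color set is pairwise overlapping, so their starting positions form a clique in the cyclic graph on $[n]$ whose edges connect pairs at cyclic distance at most $r-1$. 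For $n$ large in $r$, this graph has maximum clique size exactly $r$ (any $r$ consecutive positions form such a clique, while any $r+1$ distinct positions must contain a pair at cyclic distance at least $r$, by a short pigeonhole on the gap sequence), so each color set is realized by at most $r$ paths. Counting all $n$ paths by color set yields $n \leq r\sum_{j=1}^{r}\binom{k}{j} = O(k^{r})$, i.e., $k = \Omega(n^{1/r})$.

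For the upper bound, take $k$ to be the smallest integer with $\binom{k}{r}\geq n$, so $k = O(n^{1/r})$. I would aim for a cyclic $k$-coloring of $[n]$ satisfying (a) every $r$ consecutive vertices carry $r$ pairwise distinct colors, and (b) the $n$ sets of $r$ consecutive vertices have pairwise distinct color sets. Such a coloring is automatically CF: for any two disjoint length-$r$ paths $A,B$, property (b) provides some $c\in C(A)\setminus C(B)$, and by (a) this $c$ is carried by a unique vertex of $A$, which is then the unique representative of $c$ in $V(A)\cup V(B)$. Producing such a coloring is equivalent to exhibiting a closed walk of length $n$ visiting $n$ distinct vertices in the Johnson graph $J(k,r)$, since consecutive color sets along the cyclic sequence must differ in exactly one element; this is precisely a partial universal cycle of $r$-subsets of $[k]$ of length $n$.

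The main obstacle is producing this closed walk of length exactly $n$: the classical universal cycles for $r$-subsets (Chung--Diaconis--Graham, Jackson, Hurlbert) have length $\binom{k}{r}$, which typically strictly exceeds $n$. I would address this by enlarging $k$ by an additive constant and invoking the known Hamiltonicity and (where needed) pancyclicity of Johnson graphs to extract a cycle of length exactly $n$, or equivalently by building an explicit cyclic Gray-code ordering of $r$-subsets of $[k]$ and closing it up carefully. The lower bound is in contrast a clean pigeonhole argument once the color-set-collision lemma and the $r$-clique bound on the cyclic overlap graph are established.
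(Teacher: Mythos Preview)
Your lower bound is correct and essentially the paper's argument, though the paper does it more simply: instead of looking at all $n$ windows and invoking a clique bound on the overlap graph, it just takes the $\lfloor n/r\rfloor$ pairwise disjoint blocks $\{ri-r+1,\ldots,ri\}$, observes (exactly as you do) that no two of them can receive the same color set, and pigeonholes directly against the number of subsets of $[k]$ of size at most $r$.

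The upper bound has a genuine gap. Your claimed equivalence
\[
\text{cyclic coloring of }[n]\text{ with properties (a),(b)} \;\Longleftrightarrow\; \text{length-}n\text{ cycle in the Johnson graph }J(k,r)
\]
holds only in the forward direction. A cycle in $J(k,r)$ need not arise from any sequence of colors. For a concrete failure, take the $3$-cycle $\{1,2\},\{1,3\},\{1,4\}$ in $J(4,2)$: the only candidate for the color sequence is $\varphi=(a_1,a_2,a_3)$ with $a_i$ the element dropped at step $i$, namely $(2,3,4)$, and the resulting $2$-windows are $\{2,3\},\{3,4\},\{4,2\}$, which do not reproduce the prescribed sets. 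The point is that a partial universal cycle for $r$-subsets is a much more rigid object than a Johnson-graph cycle: going from $S_i$ to $S_{i+1}$ you must drop the \emph{specific} element that entered $r$ steps earlier, not an arbitrary one. Consequently, appealing to Hamiltonicity or pancyclicity of $J(k,r)$ (or to Gray codes of $r$-subsets, which are the same thing) does not produce the coloring you need.

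The paper's route avoids this by invoking the existence of genuine universal cycles for $\binom{[c]}{r}$, now known for all sufficiently large $c$ with $r\mid\binom{c-1}{r-1}$ by the Glock--Joos--K\"uhn--Osthus theorem. Reading off the universal cycle gives a CF-coloring of $\mathsf{D}_r^2(n)$ for $n=\binom{c}{r}$ with $c=\Theta(n^{1/r})$ colors; general $n$ is then handled by restricting from the next admissible $n'\ge n$ (with a couple of fresh colors on the boundary vertices to take care of the wraparound paths). So your overall strategy is right, but the object you must produce is a universal cycle for subsets, not a Johnson-graph cycle, and for that you need the cited existence theorem rather than pancyclicity.
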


An important ingredient to the proof of Theorem \ref{thm:m_r_intervals} is universal cycles for subsets. A \textit{universal cycle for} $\binom{[c]}{r}$ is a cyclic sequence with $\binom{c}{r}$ elements from $[c]$, such that every $r$ consecutive elements are distinct and every element of $\binom{[c]}{r}$ appears exactly once consecutively. These objects were first studied by Chung, Diaconis and Graham \cite{universal_cycle_Chung_Diaconis_Graham}. There, the authors provided a necessary condition for the existence of a universal cycle, namely, that $r$ divides $\binom{c-1}{r-1}$. They also conjectured that this condition is also sufficient for a sufficiently large $c$. Since then, partial results were obtained \cite{universal_cycle_exact_Jackson, universal_cycle_exact_Hurlbert,
near_universal_cycle_subsets_Lonc2} as well as approximate versions \cite{universal_approx_blackburn,near_universal_cycle_subsets_Curtis,near_universal_cycle_subsets_Lonc,near_universal_cycle_subsets_Lonc2}. The conjecture was recently proved affirmatively by Glock, Joos, K\"{u}hn and Osthus \cite{euler_tours_hypergraphs_kuhn}:

\begin{theorem}
For every positive integer $r$, there exists $c_0$ such that for all $c \geq c_0$, there exists a universal cycle for $\binom{[c]}{r}$ whenever $r$ divides $\binom{c-1}{r-1}$.
\end{theorem}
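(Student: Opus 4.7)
The plan is to recast the existence of a universal cycle as an Eulerian-type problem in an auxiliary structure and then attack it with the absorbing method together with iterative absorption, which has become the standard route to decomposition results of this kind. I would first set up the digraph $D = D(c,r)$ whose vertex set is the collection of ordered $(r-1)$-tuples of distinct elements of $[c]$, with a directed edge from $(a_1,\dots,a_{r-1})$ to $(a_2,\dots,a_r)$ whenever $a_r \notin \{a_1,\dots,a_{r-1}\}$; label this edge by the unordered $r$-set $\{a_1,\dots,a_r\}$. A universal cycle for $\binom{[c]}{r}$ is then exactly a choice of one edge per label such that the chosen edges form a connected spanning Eulerian sub-digraph whose closed Euler tour is read off as the universal cycle. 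A bookkeeping calculation of in-degrees and out-degrees shows that balancing them at every vertex forces $r \mid \binom{c-1}{r-1}$, which recovers the stated divisibility condition as a necessary consequence; the task is to prove it is sufficient once $c \geq c_0(r)$.

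Next I would translate the problem to hypergraph decomposition. Let $J$ be the auxiliary $r$-partite $r$-uniform hypergraph whose hyperedges encode the $r!$ ordered representatives of each unordered $r$-subset of $[c]$, grouped by label. A universal cycle corresponds to a perfect matching in $J$ (one representative per label) whose induced sub-digraph is Eulerian and connected. I would prove the existence of such a matching by the absorption framework: (i) construct, at the outset, a small \emph{absorbing structure} $A$ consisting of a carefully balanced collection of labeled edges with the property that, for any reasonably small leftover $L$ of unmatched labels satisfying the same divisibility and degree-parity conditions, $A \cup L$ admits a valid local re-selection that patches $L$ into an Eulerian configuration; (ii) run a random/greedy \emph{covering} stage (Rödl nibble or a pseudo-random process in the style of Glock--Joos--Kühn--Osthus) to match almost all remaining labels while keeping the partial digraph close to regular; (iii) invoke the absorber to finish. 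Connectivity is handled by absorbing short cycle-shortcuts to merge components.

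Step (i), the absorber construction, is where I expect the bulk of the difficulty to lie. The standard recipe is to build $A$ out of many independent small gadgets, each a local labeled cyclic configuration on $O_r(1)$ vertices that can be ``switched'' into one of two Eulerian configurations differing by a single edge, so that activating appropriate switches turns any feasible leftover into an Eulerian completion. Showing such gadgets exist in $D(c,r)$ requires exhibiting concrete families of $r$-subsets with prescribed intersection patterns, then verifying that a random embedding gives the switching flexibility needed to absorb \emph{arbitrary} leftovers obeying the divisibility constraint — this is the heart of the Glock--Joos--Kühn--Osthus breakthrough and is genuinely delicate.

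Step (ii), the near-perfect covering phase, I would do by iterated absorption: split $[c]$ hierarchically into vortices $V_1 \supset V_2 \supset \cdots$, apply a nibble to reduce the uncovered labels inside $V_i$ to a small set supported on $V_{i+1}$, and preserve invariants (degree regularity, quasirandomness) at every step by standard concentration arguments such as Azuma--Hoeffding or the Kim--Vu polynomial method. The final residual is small enough for the absorber $A$ from step (i) to handle it. The connectedness requirement for a single universal cycle, as opposed to a disjoint union of cycles, is then ensured by including in $A$ a few designated ``merging'' gadgets that can splice any collection of components into one.
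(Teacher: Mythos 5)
The paper does not prove this statement at all: it is quoted verbatim as an external black-box result of Glock, Joos, K\"uhn and Osthus \cite{euler_tours_hypergraphs_kuhn}, so there is no internal proof to compare yours against. What you have written is a high-level reconstruction of the strategy of that cited paper (transition digraph on ordered $(r-1)$-tuples, reduction to an Eulerian selection problem, absorption plus iterative absorption/nibble, merging gadgets for connectivity). The framing is broadly faithful to how the result is actually proved in the literature, and your degree-counting derivation of the necessary condition $r \mid \binom{c-1}{r-1}$ is correct in spirit (each element $x$ must occur exactly $\binom{c-1}{r-1}/r$ times in the cyclic sequence).

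However, as a proof it has a genuine and decisive gap, which you yourself flag: every step that carries the mathematical weight is asserted rather than established. You do not construct the switching gadgets, do not prove that a bounded absorber can handle an \emph{arbitrary} leftover subject only to the divisibility condition (this requires identifying the correct lattice of achievable ``leftover vectors'' and showing the gadgets generate it), do not verify that the nibble/vortex stage preserves the quasirandomness invariants needed for the absorber to apply, and do not prove that the merging gadgets suffice to guarantee a single connected Euler tour rather than a disjoint union of closed trails. Writing ``this is the heart of the breakthrough and is genuinely delicate'' is an accurate self-assessment, but it means the proposal is a research plan, not a proof. Since the theorem is used in the paper purely as a citation, the appropriate treatment here is to cite it, not to re-derive it; if you did intend to supply a proof, each of the three deferred components would need to be carried out in full.
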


Since we use similar argument as in above, we only provide proof sketch of Theorem \ref{thm:m_r_intervals}.
\begin{proof}[Proof sketch of Theorem \ref{thm:m_r_intervals}]
For the lower bound, recall that the proof of Theorem \ref{thm:lower_bound_CF} (1) for $\FC_4(n)$ uses an upper bound on the number of colors assigned to unordered pairs of the form $\{2i-1, 2i\}$ in $[n]$ when a CF-coloring of $\FC_4(n)$ using $c$ colors is given. For $\mathsf{D}_r^2(n)$, we rather use $r$-subsets of $[n]$ of the form $\{ri-r+1,ri-r+2,\dots, ri\}$ in $[n]$ and then apply the same argument.

For an upper bound, recall that for $\FC_4(n)$ we make use of an Eulerian circuit of the complete graph $K_c$ for odd $c$ to CF-color $\FC_4(n)$. We use a similar argument for $\mathsf{D}_r^2(n)$ except for that we use a universal cycle for $\binom{[c]}{r}$. Put $n=\binom{c}{r}$. For $[n]$, by following a universal cycle for $\binom{[c]}{r}$ and $C_{[n]}$ simultaneously, we assign the current element of the universal cycle to the current vertex of $C_{[n]}$ as a color. This gives a CF-coloring of $\mathsf{D}_r^2(n)$.
\end{proof}

If we can follow the same line of proof 
for larger $m$ as for $\FC_d(n)$, the following question might suggest a Tur{\'a}n type problem for hypergraphs, which is generally known to be very challenging.

\begin{Q} \label{question_m_r_intervals}
What is $\CF(\mathsf{D}_r^m(n))$ for positive integers $m$ and $r$? 
\end{Q}

\section{CF-chromatic number of the 2-interval hypergraph} \label{sec:2-interval} 
A set of integers is called a \textit{(discrete) interval} if it consists of consecutive integers. The \textit{2-interval hypergraph} on $[n]$ is
\[\II^2_n=\{I_1\cup I_2: I_1, I_2\subseteq [n],\, |I_1\cup I_2| \geq 3, \textrm{ and $I_1$ and $I_2$ are intervals} \}.\]
In this section, we prove Theorem \ref{thm:2-interval}, that is, we show that $\CF(\II^2_n)=\Theta(\sqrt{n})$. 

\smallskip 

Recall that from Theorem \ref{thm:tight_upper}
we have $\CF(\FC_4(n))= \Theta(\sqrt{n})$. We also have $\CF(\II^2_n)+1\geq \CF(\FC_4(n))$; we assign a new color to $1$ (or $n$) from a CF-coloring of $\II^2_n$ and this gives a CF-coloring of $\FC_4(n)$. Therefore, we have the following.
\begin{proposition} \label{prop:lowe-2-interval}
	$\CF(\II^2_n)=\Omega(\sqrt{n})$.
\end{proposition}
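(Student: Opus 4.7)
The plan is to deduce the bound by reducing to the already-established lower bound on $\CF(\FC_4(n))$. Concretely, I would show that every CF-coloring of $\II^2_n$ can be converted into a CF-coloring of $\FC_4(n)$ using just one extra color; combined with Theorem~\ref{thm:tight_upper}, this gives $\CF(\II^2_n) \geq \CF(\FC_4(n)) - 1 = \Omega(\sqrt{n})$.

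To carry this out, the first step is to classify which facets of $\mathsf{C}_4(n)$ are not already hyperedges of $\II^2_n$. Applying Gale's evenness criterion (Theorem~\ref{thm:gale}) to a $4$-subset $S \subset [n]$, a short case analysis on whether $S$ contains $1$ and/or $n$ shows that every facet is a union of at most two discrete intervals except for those of the form $\{1, a, a+1, n\}$ with $3 \leq a \leq n-3$, which decompose into three intervals (the singletons $\{1\}$ and $\{n\}$ together with the pair $\{a, a+1\}$). The crucial feature is that every such exceptional facet contains the vertex $1$.

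The second step is the coloring transformation. Given a CF-coloring $\varphi\colon [n] \to [c]$ of $\II^2_n$, define $\varphi'\colon [n] \to [c+1]$ by reassigning vertex $1$ a fresh color and leaving all other vertices unchanged. For any facet of $\mathsf{C}_4(n)$ containing $1$, vertex $1$ is uniquely colored under $\varphi'$. For any facet not containing $1$, by the classification above it is already a hyperedge of $\II^2_n$, so its uniquely colored vertex under $\varphi$ (which is necessarily different from $1$) is preserved under $\varphi'$ since no vertex other than $1$ changed color. Thus $\varphi'$ is a CF-coloring of $\FC_4(n)$ with $c+1$ colors.

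I do not foresee any substantive obstacle; the only point that requires care is the case analysis in Step~1, and even that could be avoided by a slightly different route: one can note that the hyperedges of the form $\{2i_1-1, 2i_1, 2i_2-1, 2i_2\}$ used in the proof of Theorem~\ref{thm:lower_bound_CF}(1) are themselves unions of two disjoint intervals of size $2$, so the very same palette-graph counting argument applied directly inside $\II^2_n$ yields $\CF(\II^2_n) = \Omega(\sqrt{n})$ without any reduction.
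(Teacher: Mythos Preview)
Your proposal is correct and matches the paper's own proof essentially verbatim: the paper also argues that assigning a new color to vertex $1$ (or $n$) turns any CF-coloring of $\II^2_n$ into a CF-coloring of $\FC_4(n)$, yielding $\CF(\II^2_n)+1\geq \CF(\FC_4(n))=\Omega(\sqrt{n})$. Your alternative route via the palette-graph count on the hyperedges $\{2i_1-1,2i_1,2i_2-1,2i_2\}$ is likewise the direct argument the paper alludes to in the introduction.
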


For the upper bound, we need some preparations. We follow a similar approach as in Section \ref{section:cyclic} finding a suitable palette graph. Our coloring of $\II^2_n$ is based on the Hamiltonian path decomposition of the complete graph $K_{2k}$ for a positive integer $k$ due to Walecki \cite{walecki}.

\begin{figure}[ht]
	\centering
	\includegraphics[totalheight=6cm]{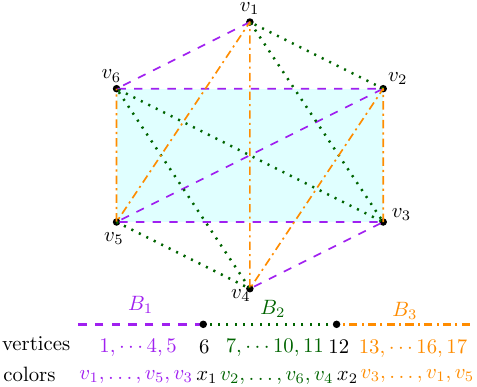}
	\caption{Walecki's Hamiltonian path decomposition and illustration of a CF-coloring of $\II^2_n$ when $k=3$. For the rectangle $\conv\{v_2,v_3, v_5, v_6\}$, the edge $v_2v_6$ and $v_3v_5$ are interior edges.}
	\label{fig:Walecki}
\end{figure}
See Figure \ref{fig:Walecki} for when $k=3$. In the figure, $K_6$ is decomposed into $3$ Hamiltonian paths of dashed (purple), dotted (green) and dash-dotted (orange) lines, respectively. For example, the dashed path is a zigzag path inside the regular hexagon starting from $v_1$ and ending with $v_4$ antipodal to $v_1$. The other paths are obtained by rotating the dashed path.

Now we formally define Walecki's decomposition of $K_{2k}$ for every $k$ in the following way. We denote the vertices of $K_{2k}$ by $v_1, v_2, \dots, v_{2k}$.  Let $D_{2k}$ be the cycle $v_1v_2\dots v_{2k}v_1$. In $D_{2k}$, there are two paths from $v_1$ to $v_{k+1}$, say $P_1$ and $P_2$, where $P_2$ contains $v_2$. We construct a Hamiltonian (zigzag) path $Z_1$ of the complete graph $K_{2k}$ on $v_1, \dots, v_{2k}$ from $v_1$ to $v_{k+1}$ such that starting from $P_1$, we take the vertices from $P_1$ and from $P_2$ alternatingly, by simultaneously following $P_1$  and $P_2$ from $v_1$ to $v_{k+1}$. For $i \in [k]$, from $Z_1$ we obtain the path $Z_i$ which contains $v_i$ as its end vertex by applying the rotation action $v_j\mapsto v_{j+i-1}$ in modulo $2k$.

\begin{proof}[Proof of Theorem \ref{thm:2-interval}]
	The lower bound is shown in Proposition \ref{prop:lowe-2-interval}. It is enough to give a CF-coloring of $\II^2_n$ which uses $O(\sqrt{n})$ colors. We fix a positive integer $k$. We show that the 2-interval hypergraph on $[n]=[2k^2-1]$ has a CF-coloring which uses $O(\sqrt{n})$ colors. 
	
	(See Figure \ref{fig:Walecki} for illustration for when $k=3$.)
 In $[n]$, by choosing suitable $k-1$ separators which form a subset $S \subset [n]$, we divide $[n]\setminus S$ into $k$ blocks 
 $B_1, \dots, B_k$ of consecutive numbers of the same size $2k-1$ where the indices are given in an increasing order. In each block $B_i$, we simultaneously follow elements of $B_i$ in an increasing order and the Hamiltonian path $Z_i$ of Walecki's construction (of the complete graph on $2k$ vertices) from $v_i$ right before arriving at $v_{i+k}$, and assign the current vertex $v_j$ of $Z_i$ to the current element $t$ of $B_i$ as the color of $t$. We also assign an additional color $x_i$ to the $i$th separator for $i\in [k-1]$. This completes the coloring. Note that we used $3k-1=O(\sqrt{n})$ colors. 
	
	Now we show that it is a CF-coloring of $\II^2_n$. Consider two intervals $I_1$ and $I_2$ in $[n]$. We show that there is a unique color in $I_1 \cup I_2$. If $I_1$ or $I_2$ contains a separator, then we are done because separators have unique colors. So both $I_1$ and $I_2$ are contained in a single block, say $B_{i_1}$ and $B_{i_2}$, respectively. The case when $B_{i_1}=B_{i_2}$ is also easy, because all vertices attain distinct colors in each block. So $B_{i_1}\ne B_{i_2}$. By the same reason, we may assume $|I_1|=|I_2|$ since otherwise there is a unique color in $I_1 \cup I_2$.
	
	Hence, the remaining case is when there are distinct blocks $B_{i_1}$ and $B_{i_2}$ with $I_1 \subseteq B_{i_1}$, $I_2 \subseteq B_{i_2}$ and $2 \leq |I_1|=|I_2|\leq 2k-1$. Denote our coloring of $\II^2_n$ by $\varphi$. It is enough to show that $\varphi(I_1) \ne \varphi(I_2)$. Suppose otherwise. 
	
	Consider the regular $(2k)$-gon $D$ in the plane where we identify the vertices of $D$ with the colors $v_1, \dots, v_{2k}$. For each $j\in [2]$, by construction $\conv(\varphi(I_j))$ has 1 or 2 boundary edges which are not edges of $D$ since $|I_j|<2k$. We call them \textit{interior edges} of $\conv(\varphi(I_j))$. Note that $Z_{i_j}$ is a unique zigzag path among
	$Z_1, \dots, Z_{k}$ which contains all interior edges of $\conv(\varphi(I_j))$: the uniqueness can be shown by that no two zigzag paths among $Z_1,\dots, Z_{k}$ share a common edge. Since $\conv(\varphi(I_1))=\conv(\varphi(I_2))$, the two convex hulls have the same set of interior edges, and the same unique zigzag paths containing them, that is, $Z_{i_1}=Z_{i_2}$. This implies $i_1=i_2$ which leads to a contradiction with that $B_{i_1}$ and $B_{i_2}$ are distinct.
\end{proof}

\section{Discussion and open problems} \label{sec:final_remarks}

\textbf{Excluding subgraphs with exactly $l$ edges and minimum degree at least $2$.} Note that we can use the following weaker condition than Condition (2) in Lemma \ref{lemma:sufficient_condition_palette}.
\begin{itemize}
    \item[(2')] $G$ does not contain a subgraph with exactly $l=d/2$ edges and minimum degree at least $2$.
\end{itemize}
As seen from this and previously in Section \ref{sec:lower-bound}, the following question has close ties to the CF-coloring problem of $\FC_d(n)$.
\begin{Q}
How many edges can a graph on $n$ vertices have if it contains no subgraphs with exactly $l$ edges and minimum degree at least $2$?  
\end{Q}
\smallskip

\textbf{CF-chromatic number of the $m$-interval hypergraph $\mathsf{I}_n^m$.}
For $m\geq 2$, the \textit{$m$-interval hypergraph} on $[n]$, which we denote by $\II^m_n$, is the hypergraph where the vertex set is $[n]$ and the hyperedge-set is
\[\cE(\II^m_n)=\left\{ \bigcup_{i=1}^mI_i: \left|\bigcup_{i=1}^mI_i\right| \geq m+1, \textrm{ and $I_1,\dots, I_m$ are intervals in $[n]$} \right\}.\]
As mentioned at introduction for 2-interval hypergraphs,  
similarly $\II^m_n$ contains (asymptotically) almost all the hyperedges of $\Hm{2m}{n}$ of size at least $m+1$.
Note also that Theorem \ref{thm:lower_bound_CF} on $\FC_{2m}(n)$ gives a lower bound for $\CF(\II^m_n)$.

\begin{Q}
What is $\CF(\II^m_n)$ when $m \geq 3$? Is it asymptotically same as $\CF(\FC_{2m}(n))$?
\end{Q}
Regarding 
Question \ref{question_m_r_intervals},
we can also ask the following question.
\begin{Q}
In the definition of $\II^m_n$, what if we give a lower bound condition on the size of each interval? How does it affect $\CF(\II^m_n)$?
\end{Q}

\textbf{$(AB)^{l/2}$-free Hypergraphs.}
For a positive integer $l$, a hypergraph $H=(V,\cE)$ on ordered vertex set $V$
is called $(AB)^{l/2}$\textit{-free} if there is no sequence of $l$ ordered vertices $v_{1},\ldots,v_{l}$ such that the odd-indexed vertices $v_1, v_3, \dots$ are in $A\setminus B$ and the even indexed vertices $v_2, v_4, \dots$ are in $B\setminus A$ for some hyperedges $A$ and $B$ of $H$. 
The notion of $(AB)^{l/2}$-free hypergraphs
has been actively studied recently in context of coloring problems on geometric hypergraphs, see, e.g., \cite{ABAB_stabbed_pseudodisk, ABA, dual_ABAB2024, complexity_ABAB}.

As we already noticed, $\Hm{d}{n}$ and $\II^m_n$ has natural orders on their vertex sets. Following those orders, note that $\II^m_n$ is an $(AB)^{(2m+1)/2}$-free hypergraph: Suppose that there are hyperedges $A$ and $B$ and ordered vertices $v_1, v_2, \dots, v_{2m+1}$ such that $v_i \in A\setminus B$ when $i$ is odd and $v_{i} \in B\setminus A$ when $i$ is even. Then, $A$ cannot consist of only $m$ intervals. One can easily see that the number $(2m+1)/2$ is tight for $\II^m_n$, that is, $\II^m_n$ is not $(AB)^{l/2}$-free for $l\leq 2m$.

Further $\Hm{d}{n}$ is an $(AB)^{(d+2)/2}$-free hypergraph and again the number $(d+2)/2$ is tight (note that there is a difference between $\II^m_n$ and $\Hm{2m}{n}$ regarding the $(AB)^{l/2}$-free property): 
Let us denote the vertex set of $\Hm{d}{n}$ on $\gamma_{d}$ by $P$. Assume to the contrary that there exists a subset $Q\subseteq P$ of  $d+2$ points $Q=\{q_1,\ldots q_{d+2}\}$, and hyperedges $A$ and $B$ such that the odd indexed points of $Q$ (along $\gamma_{d}$) $q_1,q_3,\ldots,$ belong to $A\setminus B$  
and the even indexed points $q_2,q_4,\ldots$ belong to $B \setminus A$. In particular $q_1,q_3,\ldots,$ belong to $A$ and  $q_2,q_4,\ldots$ do not belong to $A$. Let $h$ be a hyperplane such that the halfspace $h^+$ bounded by $h$ witness the hyperedge $A$, namely, $h^+ \cap P = A$. This means that the odd indexed points $q_1,q_3,\ldots,$  all in $h^+$ and  the even indexed points $q_2,q_4,\ldots$ lie in the complement halfspace $h^-$. In particular the hyperplane $h$ separates the even indexed points from the odd indexed points and therefore must intersect $\gamma_{d}$ between any two consecutive points $q_i$ and $q_{i+1}$ for $ i \in \{1,\ldots,d+1\}$. So $h$ intersects $\gamma_{d}$ in $d+1$ points. This is a contradiction to the fact that any hypeplane in $\mathbb{R}^d$ intersects $\gamma_d$ at most $d$ times.

On the other hand, the other direction does not hold in general. That is, $\II^m_n$ (and $\Hm{d}{n}$, respectively) and its sub-hypergraphs are just special cases 
of $(AB)^{(2m+1)/2}$-free (and $(AB)^{(d+2)/2}$-free, respectively) hypergraphs, see Figure \ref{fig:ABABA}. Hence it is natural to ask the following general question.

\begin{figure}
    \centering
    \includegraphics[width=0.5\linewidth]{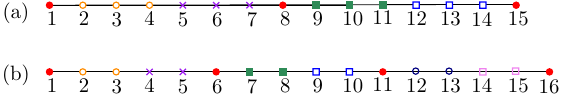}
    \caption{(a) An $ABABA$-free ($(AB)^{2.5}$-free) hypergraph which is not a $2$-interval hypergraph. (b) $ABABAB$-free ($(AB)^{3}$-free) hypergraph which is not a sub-hypergraph of $\Hm{4}{16}$ where $\gamma_4$ is described simply as a straight line. Each color describes distinct hyperedges. A similar construction can be obtained for more intervals or higher dimensions.}
    \label{fig:ABABA}
\end{figure}

\begin{Q} \label{question_AB-free}
For a positive integer $l$, what is the CF-chromatic number of an $(AB)^{l/2}$-free hypergraph without any hyperedges of size at most $\lfloor (l-1)/2 \rfloor$?    
\end{Q}
 Note that the threshold $\lfloor (l-1)/2 \rfloor$ comes from the threshold $d/2$ of Question \ref{question_main} by considering $\II^{(l-1)/2}_n$ (as a sub-hypergraph of $\Hm{l-1}{n}$) when $l$ is odd, and $\Hm{l-2}{n}$ when $l$ is even.

\smallskip

\textbf{Conflict-free coloring of general simplicial spheres.} 
Recently there have been studies on proper colorings and transversals of (the facet hypergraphs of) simplicial spheres \cite{transversals_spheres_joseph_michael,novik_transversal_open_problem, cho2023transversal, lee2023colorings, novik2024transversalnumberssimplicialpolytopes}. Many results were on lower bound constructions to obtain large chromatic number or transversal ratio (the ratio between transversal number and the vertex set size). There the moment curve or cyclic polytopes played an important role. As a side effect, the current lower bounds are relatively weaker for even-dimensional spheres than odd-dimensional ones as we could see for $\FC_d(n)$ in Proposition \ref{prop:proper_cyclic}.

On the other hand, in \cite{lee2023colorings} some other construction was obtained using different methods, which shows that for any dimension $d \geq 4$ and any positive integer $c$, there is a $d$-dimensional simplicial sphere whose facet hypergraph has a proper chromatic number at least $c$. Since we have $\CF(H)\geq \chi(H)$ for any hypergraph $H$, this implies that the CF-chromatic number is unbounded for even-dimensional spheres. Unfortunately, the construction highly depends on facts from PL topology so it does not have an explicit description to be used practically. This motivates the following question.

\begin{Q}
What is the asymptotically maximum CF-chromatic number of $d$-dimensional simplicial spheres for each even dimensions $d$? 
\end{Q}
\section*{Acknowledgements}
We want to thank Zolt\'{a}n F\"{u}redi, Dong Yeap Kang and Felix Lazebnik for pointing out relevant references in extremal graph theory. We also thank Dennis Wong and Lonc Zbigniew   for their answers to our questions on  universal cycles for subsets. Thanks are further extended to an anonymous refree who provided many helpful remarks.

\bibliographystyle{hplain}
\bibliography{bibliography}

\end{document}